\title
[Adjoint shadowing lemma] 
{
Backpropagation in hyperbolic chaos via adjoint shadowing 
}
\begin{document}

\begin{abstract}

To generalize the backpropagation method to both discrete-time and continuous-time hyperbolic chaos, we introduce the adjoint shadowing operator $\mathcal{S}$ acting on covector fields. We show that $\mathcal{S}$ can be equivalently defined as:
\begin{enumerate}
\item $\mathcal{S}$ is the adjoint of the linear shadowing operator $S$;
\item $\mathcal{S}$ is given by a `split then propagate' expansion formula;
\item $\mathcal{S}(\om)$ is the only bounded inhomogeneous adjoint solution of $\om$.
\end{enumerate}
By (a), $\mathcal{S}$ adjointly expresses the shadowing contribution, a significant part of the linear response, where the linear response is the derivative of the long-time statistics with respect to system parameters.
By (b), $\mathcal{S}$ also expresses the other part of the linear response, the unstable contribution.
By (c), $\mathcal{S}$ can be efficiently computed by the nonintrusive shadowing algorithm in \cite{Ni_nilsas}, which is similar to the conventional backpropagation algorithm.
For continuous-time cases, we additionally show that the linear response admits a well-defined decomposition into shadowing and unstable contributions.

\smallskip
\noindent \textbf{Keywords.}
adjoint operator, 
shadowing lemma,
linear response,
nonintrusive shadowing,
fast response,
backpropagation.
\end{abstract}

\maketitle

\section{Introduction}

\subsection{Literature review}
\hfill\vspace{0.1in}

% adjoint method
The adjoint method is a method for efficiently computing the gradient of an objective (or observable) function $\Phi$ with respect to many system parameters $\gamma$; 
it typically involves computing some adjoint operators, and it is typically evaluated on one or a few orbits of a dynamical system.
Its cost is almost independent of the number of parameters.
Conventionally, the adjoint method solves one inhomogeneous adjoint equation, which is the pullback of covectors, and it runs backwards in time.
Hence, adjoint method is also called the backpropagation method.

The adjoint method is the adjoint of the pathwise perturbation method.
Hence, the conventional adjoint method works only for stable systems or short-time.
For unstable systems with positive Lyapunov exponents, we run into the so-called `gradient explosion' phenomenon, that is, the adjoint solutions (or the pullback of covectors) 
grow exponentially fast as we increase the orbit length.
As a result, the derivative of the objective at a specific time is poorly defined when the orbit is long, and we need to consider the perturbation  of an averaged objective over a long-time or a measure.
The derivative of averaged objective is called the linear response, and conventional adjoint method does not work on this case.
Pioneering attempts to fix the adjoint method, such as the gradient clipping, do not have good math explanations and not always work well \cite{clip_gradients2}.

The linear response is theoretically studied in hyperbolic systems, where all Lyapunov exponents are non-zero \cite{Ruelle_diff_maps,Dolgopyat2004,Baladi2007,Gallavotti1996, Gouezel2006,Gouezel2008,Baladi2017}.
The chaotic hypothesis of Gallavotti and Cohen states that many high-dimensional physical systems are hyperbolic \cite{gallavotti_chaotic_hypothesis_1995,Bonetto06,Wormell2019}.
It seems that this hypothesis does not hold strictly, since Wormell and Gottwald recently gave a counter example \cite{Wormell2019,wormell22}.
Also, weather systems seem to have a significant amount of non-hyperbolic directions \cite{LE_weather}, but still seem to have linear responses.
However, even with these counter examples, the morale of the chaotic hypothesis seems to be still valid: real physical systems may have a large ratio of hyperbolic directions or a large hyperbolic region, so theories based on hyperbolicity should still be a very important part of a full solution.

The most well-known linear response formulas are the pathwise perturbation formula, the divergence formula, and the kernel differentiation formula,  all having adjoint versions.
The kernel differentiation formula, also known as the likelihood ratio method in probability context \cite{Rubinstein1989,Reiman1989,Glynn1990,Hairer2010,np}, requires that the system must be stochastic, and the cost is large when the noise is small.
The other two formulas work for both deterministic and stochastic systems; they are not hindered by small noise, but both have their own unique shortcomings.
In this paper, we consider only the deterministic case with hyperbolicity: this was the path for classical dynamical systems theory.

The pathwise perturbation formula averages conventional adjoint formula over a lot of orbits, which is also known as the ensemble method or stochastic gradient method; however, it is cursed by the gradient explosion \cite{Lea2000,eyink2004ruelle,lucarini_linear_response_climate,lucarini_linear_response_climate2}. 
The divergence formula, also known as the transfer operator formula computes the perturbation operator, which is not pointwisely defined, and we have to partition the full phase space to obtain some mollified approximation: this is cursed by dimensionality \cite{Keane1998,Froyland2007,Liverani2001,Ding1994,Pollicott2000,Galatolo2014,Galatolo2014a,Wormell2019,Crimmins2020,Antown2022,Wormell2019a,Froyland2013a,SantosGutierrez2020,Bahsoun2018,Zhang2020}.
A promising direction is to blend the two formulas, but the obstruction was that the split lacks smoothness \cite{abramov2007blended}.
There are pointwisely defined formulas such as \cite{Ruelle_diff_maps,Gouezel2008}, but they still involve terms whose expressions are not obvious; moreover, they are not recursive, so can not be computed efficiently; even if we use our tools to make those formulas recursive, the number of recursive relations would be large.

Our recent work, the fast response formula and algorithm, solve the smoothness issue for the blended approach, and computes the linear response of discrete-time hyperbolic systems on a sample orbit \cite{fr}.
In some sense the formula is the ergodic theorem for linear response: it computes the pushforward of $2u+2$ $M$-dimensional vectors 
on an orbit, and then the average of some pointwise functions of these vectors converge to the linear response.
Here $M$ is the phase space dimension, and $u$ is the number of unstable Lyapunov exponents.
This number of recursive relations seems to be close to the least possible, since we need at least $u$ many relations to capture all the unstable perturbative behaviors.

% intuition of shadowing, review of NILSS
In the discrete-time fast response algorithm, the linear response is decomposed into shadowing and unstable contributions.
The shadowing contribution ($SC$) is given by the shadowing vector, which is the difference between two orbits close to each other but with perturbed parameters \cite{Bowen_shadowing,Pilyugin_shadow_linear_formula}.
Due to the similarity between this characterization and the conventional pathwise perturbation method, the shadowing vector can be efficiently computed by the nonintrusive shadowing algorithm \cite{Ni_NILSS_JCP,Ni_fdNILSS,Ni_CLV_cylinder}.
When the ratio of unstable directions is low, with some additional statistical assumptions, shadowing can sometimes be a good approximation of the entire linear response \cite{Ruesha}.
Nonintrusive shadowing is also necessary for efficiently computing the unstable contribution in the fast response algorithm, which is given by a tangent version of the equivariant divergence formula \cite{fr}.

Our works above are not `adjoint', and their costs are linear to the number of parameters.
Adjoint theories and algorithms are nontrivial: we need more work finding good characterizations than just transposing matrices.
Finding a neat characterization is especially difficult in continuous-time, where the flow direction is a Lyapunov vector with zero exponent and can be easily captured.
Hence, we need to design special treatment for the flow direction, which also assembles well with other directions.

Moreover, there are some lingering questions about shadowing methods for continuous-time.
In particular, it was not clear that, in continuous-time cases, different previous shadowing algorithms were computing the same quantity.
It was also not clear whether shadowing methods compute a significant part of the linear response.
We shall give positive answers to these questions.

\subsection{Main results}
\hfill\vspace{0.1in}
\label{s:mainresu}

As part of the task to develop adjoint methods for the parameter-gradient of long-time statistics of hyperbolic chaos, this paper develops the adjoint shadowing theory for both discrete- and continuous-time systems.
Shadowing is of particular interest because: 
\begin{itemize}
  \item 
  The shadowing contribution $SC$ can sometimes be a good approximation of the entire linear response; 
  \item
  Shadowing is also necessary for computing the other part of linear response, the unstable contribution $UC$;
  \item 
  The shadowing covector looks very similar to the conventional backpropagation solution, so it can be implemented relatively easily.
\end{itemize}
We shall also answer some lingering questions for shadowing methods in continuous-time.

For the discrete-time system of a map $f$ parameterized by $\gamma$, section~\ref{s:adjoint1} first shows that the adjoint system on the covector space, given by recursively applying $f^*$, 
is also hyperbolic, where the pullback operator $f^*$ is basically the transposed Jacobian matrix.
So we can define  oblique projections onto the unstable and stable cotangent subspace, denoted by $\cP^u$ and $\cP^s$.

\Cref{s:pathwiseasl} and \cref{s:december} prove \cref{l:pathasl} and \cref{t:AS}, the equivalence of three characterizations of the adjoint shadowing operator $\cS$ for discrete-time.
\Cref{l:pathasl} is pathwise, whereas \cref{t:AS} is stated on an attractor with the physical measure.
First, fix a $C^\infty$ observable (also called objective) function $\Phi$.
For an orbit on the attractor $K$, the long-time-average statistic typically coincides with the SRB measure $\rho$ supported on the attractor.
Let $\rho(\Phi)$ denote the integration of $\Phi$ according to $\rho$.
The linear response is $\delta\rho(\Phi)$, where 
\[ \begin{split}
\left. \delta(\cdot):=\pp{ (\cdot)}{\gamma} \right|_ {\gamma=0},
\end{split} \]
$\gamma$ is some parameter of the system, 
Let $\cX(K)$, $\cX^{\alpha}(K)$, $\cX^{*}(K)$, and $ \cX^{*\alpha}(K)$ be the spaces of continuous or Holder-continuous vector and covector fields on $K$.
Let $S$ be the linear shadowing operator to be defined in section~\ref{s:hypersha}.
Denote $(\cdot)_n:=(\cdot)(f^nx)$.
Also, for any vector $X$ and covector $\om$, we denote the product $X\om:=\om X:=\om(X)$.

\begin{restatable}[adjoint shadowing lemma for discrete-time]{theorem}{goldbach}
\label{t:AS}
On a compact mixing axiom A attractor with physical measure $\rho$,
the adjoint shadowing operator $\cS:\cX^{*\alpha}(K) \rightarrow \cX^* (K)$ is equivalently defined by the following characterizations:
  \begin{enumerate}
  \item 
  $\cS$ is the linear operator such that
  \[ \begin{split}
    \rho(\om S(X)) = \rho(\cS(\om) X)
    \quad \textnormal{for any } X\in\cX^\alpha(K).
  \end{split} \]
  Hence, if $X=\delta f\circ f^{-1}$,
  $\nu=\cS(d\Phi)$, then the shadowing contribution is
  \[ \begin{split}
    SC = \rho(d\Phi S(X)) = \rho(\cS(d\Phi) X)
    =  \lim_{N\rightarrow \infty} \frac 1{N} \sum_{n=1}^N  \nu_n X_n \,.
  \end{split} \]
  Here the last equality holds for almost all initial conditions on the basin of the attractor according to the Lebesgue measure.
  \item 
  $\cS(\om)$ has the expansion formula given by a `split-propagate' scheme,
  \begin{equation*}
    \cS ( \om ) 
    = \sum_{n\ge 0} f^{*n} \cP^s \om_n
    -  \sum_{n\le -1}  f^{*n} \cP^u \om_n\,.
  \end{equation*}
  \item 
  The shadowing covector $\nu=\cS(\om)$ is the unique solution of the inhomogeneous adjoint equation,
  \[ \begin{split}
  \nu = f^* \nu_1 + \om, 
  \quad \textnormal{where} \quad \nu_1:=\nu \circ f.
  \end{split} \]
  \end{enumerate}
Moreover, $\cS$ preserves Holder continuity.
\end{restatable}

For the continuous-time systems, the shadowing contribution was not clearly defined before.
To do this, first, \cref{s:S} defines the quotient space
\[ \begin{split}
  A(K):=\{ (v,\eta): v\in \cX^\alpha (K), \nabla_F v\in  \cX^\alpha (K), \eta\in C^\alpha(K) \} \, / \sim.
\end{split} \]
where the equivalent relation $\sim$ is defined as
\[ \begin{split}
(v_1,\eta_1)\sim (v_2,\eta_2)
\quad \textnormal{iff} \quad 
  \cL_F v_1 +\eta_1 F = \cL_F v_2 +\eta_2 F.
\end{split} \]
We define the (tangent) shadowing operator $S:\cX^\alpha(K)\rightarrow A(K)$ as the 
\[ \begin{split}
S: X \mapsto [v, \eta],
\quad \textnormal{where} \quad 
\cL_F v + \eta F
= X .
\end{split} \]
Here $[v,\eta]$ is the equivalent class of $(v,\eta)$ according to $\sim$.
Let $F(\psi)$ be the derivative of $\psi$ along $F$,
define the space of pairs of a covector field and a scalar function, 
\[ \begin{split}
  \cA(K):=\{(\om, \psi) \,|\, 
  \om\in \cX^{*\alpha}, 
  \psi\in C^\alpha, 
  F(\psi) \in C^\alpha(K), 
  F(\psi) = \om(F)\}.
\end{split} \]
\Cref{s:sleep} shows that, for $(\om, \psi)\in\cA(K)$, $\llangle S(X);\om,\psi\rrangle$ is well-defined, where
\[ \begin{split}
  \llangle v,\eta;\om,\psi\rrangle 
  := \rho (\om v) - \rho(\eta \psi).
\end{split} \]
Hence we can well-define the shadowing contribution $SC$ as
\[ \begin{split}
  SC :=\llangle S(X); d\Phi, \Phi -\rho(\Phi) \rrangle,
\end{split}\]
These definitions can all be made pathwise.
\Cref{s:waiyi} shows that the other part of the linear response, the unstable contribution $UC$, is indeed only in the unstable direction.
Hence, there is a chance that $UC$ is small when the unstable dimensional is low, using arguments similar to \cite{Ruesha}.

\Cref{s:pathaslF} proves \cref{l:pathaslF}, the pathwise adjoint shadowing lemma in continuous-time;
\cref{s:aslF} proves its attractor version, \cref{t:ASF}.
First, we define some notations. 
Let $f$ be the flow of the vector field $F+\gamma X$, where $F$ and $X$ are two fixed vector fields.
Denote $(\cdot)_t:=(\cdot)(f^tx)$.
Let $\eps^c$ be the covector in the center subspace such that $\eps^c (F)=1$.
Let $\cL_{(\cdot)}(\cdot)$ be the Lie derivative.
Let $\nabla_{(\cdot)}(\cdot)$ be a Riemannian covariant derivative; so in $\R^M$, $\nabla_F$ is $\partial/\partial t$. 
Define the derivative along a covector, say $\nabla_\nu F$, as $\nabla_F\nu-\cL_F\nu$; by \cref{e:hey} in \cref{s:adjhyperF}, $\nabla_\nu F = -(\nabla F)^T \nu$ when $\cM=\R^M$.

\begin{restatable}[adjoint shadowing lemma for continuous-time]{theorem}{silverbach}
\label{t:ASF}
On a compact mixing axiom A attractor with physical measure $\rho$ and decay of correlations in \cref{e:deep,e:published}, 
the adjoint shadowing operator $\cS: \cA(K) \rightarrow \cX^{*}(K)$ is equivalently defined by the following characterizations:
  \begin{enumerate}
  \item 
  $\cS$ is the linear operator such that
  \[ \begin{split}
  \llangle S(X); \om, \psi \rrangle =\rho(X \cS(\om, \psi))
  \quad \textnormal{for any }
  X\in \cX^\alpha,
  \end{split} \]
  Hence, if $X=\delta f$,
  $\nu =\cS (d\Phi, \Phi -\rho(\Phi))$, 
  then the shadowing contribution is
\[ \begin{split}
  SC =\llangle S(X); d\Phi, \Phi -\rho(\Phi) \rrangle
  =\rho(X \cS(d\Phi, \Phi-\rho(\Phi))) 
  = \lim_{T\rightarrow \infty} 
  \frac 1{T} \int_{0}^T  \nu_t X_t dt \,.
\end{split} \]
  \item 
  $\cS$ has the `split-propagate' expansion formula
\begin{equation*}
  \cS (\om,\psi ) 
  = \int_{t\ge 0} f^{*t} \om^s_t dt
  -  \int_{t\le 0} f^{*t} \om^u_t dt
  - \psi \eps^c.
\end{equation*}
  \item 
  The shadowing covector 
  $\nu = \cS(\om,\psi)$ is the unique solution of the inhomogeneous adjoint ODE,
  \[ \begin{split}
  \nabla_F \nu - \nabla_\nu F 
  = \cL_F\nu
  = - \omega
  \; \textnormal{ on  } K,
  \quad \textnormal{} \quad 
  \nu F (x)  = - \psi (x)
  \; \textnormal{ at all or any } x\in K.
  \end{split} \]
  \end{enumerate}
Moreover, $\cS$ preserves Holder continuity.
\end{restatable}

\Cref{s:appdiscrete} and \cref{s:appaslf} explain and discuss the utilities of each characterization in the adjoint shadowing lemma, for both discrete- and continuous-time.
First, the expression of $SC$ in (a) by $\cS$ gives an `adjoint' method for $SC$, which means that the main term $\nu$ can be computed by a few recursive relations on an orbit, and does not depend on the parameter, or $X$:
this typically requires some non-trivial characterizations of some adjoint operators, hence the name.
The integration in $SC$ can also be sampled on an orbit.

The governing equation in (c) is the same as the conventional backpropagation method, but here we added boundedness.
This allows the shadowing covector and hence the shadowing contribution be efficiently computed via the nonintrusive shadowing algorithm in \cite{Ni_nilsas} (also see \cite{Blonigan_2017_adjoint_NILSS}, which is explained at the end of \cref{s:appaslf}).
Nonintrusive means to compute $O(u)$ many vectors recursively on one orbit, where $u$ is the unstable dimension.
The nonintrusive adjoint shadowing algorithm was used on a $10^6$ dimensional fluid problem; the cost was on the same order of simulating the flow, and the shadowing contribution is a useful approximation of the entire linear response.

Somewhat surprisingly, using (b), we can see that the other part of the linear response, the unstable contribution, can also be expressed by $\cS$ applied on an equivariant divergence.
This allows the unstable contribution be also computed with $O(u)$ cost per step.
The adjoint theory and algorithm for the unstable contribution of discrete-time systems are in \cite{TrsfOprt,far}.
The theory of continuous-time case uses adjoint shadowing lemma in continuous-time \cite{vdivF}.

\Cref{s:future} discusses a plausible future line of study: adding randomness.
It is known that many systems do not have linear response due to lacking hyperbolicity.
However, engineers can still ask for approximate derivatives.
The most plausible solution seems to be adding local noise to bad regions with poor hyperbolicity.
Then we can use the kernel differentiation trick to circumvent the bad properties of the dynamics.
However, this potential program still misses a few key techniques, in particular, we need formulas for the random and deterministic regions to communicate.

\section{Preparations for discrete-time}
\label{s:prep1}

\subsection{Hyperbolicity and tangent shadowing}
\hfill\vspace{0.1in}
\label{s:hypersha}

% basic setting
We assume uniform hyperbolicity.
Let $f$ be a smooth diffeomorphism on a smooth Riemannian manifold $\cM$, whose dimension is $M$.
Assume that $K$ is a compact hyperbolic set, that is, $T_K\cM$ (the tangent bundle restricted to K) 
has a continuous $f_*$-invariant splitting into stable and unstable subspaces, $T_KM = V^s \bigoplus V^u$,
such that there are constants $C>0$, $0<\lambda < 1$, and
\[
  \max_{x\in K}|f_* ^{-n}|V^u(x)| ,
  |f_* ^{n}|V^s(x)| \le C\lambda ^{n} \quad \textnormal{for  } n\ge 0.
\]
Here $f_*$ is the pushforward operator on vectors,
when $\cM=\R^M$, $f_*$ is the Jacobian matrix $\partial f/\partial x$.
Define the oblique projection operators $P^u$ and $P^s$, such that
\[ \begin{split}
  v = P^u v + P^s v, \quad \textnormal{where} \quad P^u v\in V^u, P^s v\in V^s.
\end{split} \]
We further assume that $K$ is an attractor, that is, there is an open neighborhood $U$, called the basin of the attractor,
such that $\cap_{n\ge0} f^nU=K$.

% what is tangent and adjoint
Then we review tangent theories and nonintrusive shadowing algorithms.
In this paper, `tangent' means the pushward of vectors, which runs forward in time, and the cost is typically independent of the number of observables $\Phi$.
Adjoint solvers typically involve the pullback of covectors, it runs backward and compute the gradient of one objective with respect to many parameters $\gamma$.

% define by characterization
We define the shadowing vector by its characterizing properties.
Take an orbit $\cO:=\{x_n=f(x_{n-1})\}_{n\in \Z}$ on the attractor with discrete topology on all steps, in particular, if $x_1=x_2$, they are still counted as two disjoint points.
Let $\cX(\cO)$ be the space of bounded sequences of vectors.
For any $X\in \cX(\cO)$, or $\{X_n\in T_{x_n} \cM\}_{n\in\Z}$, 
the shadowing vector $v\in \cX(\cO)$ is the only bounded solution of the inhomogeneous tangent equation,
\[ \begin{split}
  v_{n+1} = f_{*} v_n + X_{n+1} ,
  \quad\textnormal{where}\quad
  X:= \delta f \circ f^{-1}.
\end{split} \]
Here the subscript $(\cdot)_n:=(\cdot)(x_n)$.
Since the inhomogeneous tangent equation governs perturbations of an orbit due to $\delta f$, shadowing vector is just the first-order difference between shadowing orbits, which are two orbits with slightly different governing equations, but are always close to each other \cite{Bowen_shadowing}.
Define shadowing operator $S_\cO:\cX(\cO)\rightarrow \cX(\cO)$ on a particular path $\cO$ by
\[ \begin{split}
  S: X\mapsto v.
\end{split} \]
We can further define the linear shadowing operator $S:\cX^\alpha(K) \rightarrow \cX^\alpha(K)$, where $\cX^\alpha(K)$ is the space of Holder-continuous vector fields on $K$.
The appendix~\ref{a:holder} proves that $S$ preserves Holder continuity.
Our definition of $S$ is comparable to characterization (c) of the adjoint operator $\cS$.

% expression
We give the expansion formula of the shadowing vector that fits the definition.
First, we write out the conventional inhomogeneous tangent solution with zero initial condition, $v'$.
Propagate previous perturbations to step $k$, then sum up, 
\[ \begin{split}
  v_{k}' = \sum_{n=1}^{k}D^k_{n} X_{n} ,
  \quad
  v_{0}'=0.
\end{split} \]
The expression of the shadowing vector can be obtained similarly, but we should first decompose $X$, then propagate stable components to the future, unstable components to the past.
Both procedures are shown in figure~\ref{f:v'}.
The expansion formula for $v$ is 
\begin{equation} \begin{split} \label{e:expand_v}
  v 
  = S(X)
  = \sum_{n\ge0} f^{n}_{*} P^s X_{-n} 
  - \sum_{n\le-1} f^{n}_{*} P^u X_{-n} ,
\end{split} \end{equation}
This formula works both on an orbit and on the entire attractor.

We call this procedure to obtain $v$ a `split-propagate' scheme; it occurs very frequently in orbit-based linear response theory.
Since we split $X$ into the stable and unstable vectors, their exponential decay makes $v$ uniformly bounded.
The `propagation' is a variant linear superposition law, which guarantees that $v$ is still an inhomogeneous tangent solution: the proof is similar to that of \cref{l:pathasl} in \cref{s:pathwiseasl}.

\begin{figure}[ht] \centering
  \includegraphics[scale=1]{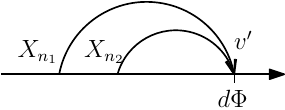}\hfill
  \includegraphics[scale=1]{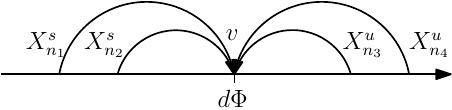}
  \caption{Inhomogeneous tangent solution with zero initial condition (left),
  and the shadowing vector (right).}
  \label{f:v'}
\end{figure}

% how to compute
We compute the shadowing vector by recovering properties in its definition.
Currently, the most efficient algorithm is the nonintrusive shadowing algorithm \cite{Ni_NILSS_JCP,Ni_fdNILSS}.
Nonintrusive means to compute the pushforward or pullback of only $O(u)$ many vectors or covectors; here we compute only $u$ many vectors.
The algorithm solves the nonintrusive shadowing problem on an orbit,
\[ \begin{split} 
  v = v' + \sum_{i=1}^u e^i a_i \,,
  \quad  \mbox{s.t. }
  \ip{v_N, e^i_N}=0
  \quad \textnormal{for all} \quad 
  1\le i\le u.
\end{split}\]
Here $v'$ is a particular inhomogeneous adjoint solution.
Intuitively, the unstable modes are removed by the orthogonal projection at the last step, where the unstable modes are the most significant \cite{Ni_NILSS_JCP}. 
Among all algorithms for stable or shadowing part of the linear response, nonintrusive shadowing is faster than some algorithm, including the stable part of blended response \cite{abramov2007blended,Wang_ODE_LSS,Blonigan_MSS}; and it seems to be faster than the others, where a direct comparison is difficult \cite{BloniganPhdThesis,Lasagna2019,Kantarakias2023}.

\subsection{Physical measure and linear response}
\hfill\vspace{0.1in}

% Axiom A attractor
Many hyperbolic sets, say mixing axiom A attractors, admit physical SRB measures, denoted by $\rho$.
That is, fix any $C^\infty$ observable function $\Phi:\cM \rightarrow \R$,
then for almost all $x$ in the attractor basin $U$,
\[ \begin{split}
  \lim_{N\rightarrow \infty} \frac 1N \sum _{k=0} ^{N-1} \Phi (f^kx) 
  = \rho(\Phi)
  : = \int \Phi(x) \rho(dx).
\end{split} \]
In other words, physical measures give the long-time statistic of the chaotic system.
The physical measure has several other characterizations \cite{young2002srb}.

Assume that the system is parameterized by a parameter $\gamma$, the parameterization $\gamma\mapsto f$ is $C^1$ as a map $\R\rightarrow C^r$, where $r\ge2$.
The linear response formula expresses $\delta \rho$ by $\delta f \in C^2$, where
\[ \begin{split}
  \delta(\cdot):=\partial (\cdot)/\partial \gamma
  \quad \textnormal{at } \gamma=0
\end{split} \]
may as well be regarded as small perturbations.
It is proved that \cite{Ruelle_diff_maps}, 
\[
  \delta \rho (\Phi)
  = \lim_{W\rightarrow\infty} \rho\left( \sum_{n=0}^W X(\Phi_n) \right) ,
\]
where $X:=\delta f\circ f^{-1}$, $X(\cdot)$ is to differentiate in the direction of $X$, and $\Phi_n = \Phi\circ f^n$.
Linear response formulas are proved to give the correct derivative for various hyperbolic systems \cite{Ruelle_diff_maps,Dolgopyat2004,Jiang06,Llave86}, yet it fails for some cases \cite{Baladi2007,Wormell2019}.

Different linear response formulas may have very different numerical cost behaviors.
But no previous algorithms could afford to run on high-dimensional deterministic hyperbolic chaos.
To get a formula with better cost, decompose the linear response,
\[ \begin{split} \label{e:ruelle22}
  \delta \rho (\Phi)  = SC - UC ,
\end{split} \]
which we call the shadowing and the unstable contribution; this decomposition is straightforward in discrete-time.
Like a Leibniz rule, the shadowing contribution accounts for the change of the location of the attractor, while the unstable contribution accounts for the change of the measure.

% utility
The shadowing vector gives the shadowing contribution of the linear response for chaotic systems,
\begin{equation} \begin{split} \label{e:sc}
  SC = \rho(d\Phi \, S(\delta f\circ f^{-1})).
\end{split} \end{equation}
This utility is similar to conventional tangent solutions, which computes the linear response, or the parameter derivative, of stable dynamical systems.

\section{Adjoint shadowing for discrete-time}
\label{s:adjoint1}

This section first shows that the system of pulling-back covectors is also hyperbolic.
Then we prove the equivalent characterizations of the adjoint shadowing operator $\cS$, first on a hyperbolic path, then on an axiom A attractor with the physical measure.
Then we discuss the utility of each characterization.

\subsection{Adjoint hyperbolicity}
\hfill\vspace{0.1in}
\label{s:adjH}

% define f^* 
This section shows that the iteration of the pullback operator $f^*$ is also hyperbolic.
Here $f^*$ is the adjoint of $f_*$, that is, for any $x\in\cM$, $w\in T_{x}\cM$, and $\eta \in T^*_{fx}\cM$,
$f^*$ is the operator such that
\[ \begin{split}
  \eta(f_{*} w) =  f^* \eta (w).
\end{split} \]
When $\cM=\R^M$, $\eta$ is a column vector and $f^* \eta = (Df)^T \eta$.
Moreover, define the norm on covectors
\[ \begin{split}
  |\eta| = \sup_w \eta(w)/ |w|.
\end{split} \]

Define the adjoint projection operators, $\cP^u$ and $\cP^s$,
such that for any $w\in T_{x}\cM$, $\eta \in T^*_{x}\cM$,
\[ \begin{split}
  \eta (P^u w) = \cP^u \eta(w),
  \quad \textnormal{} \quad 
  \eta (P^s w) = \cP^s \eta(w).
\end{split} \]
Define the image space of $\cP^u$ and $\cP^s$ as $V^{*u}$ and $V^{*s}$, then for any $v^u\in V^u$, $v^s\in V^s$, $\nu^u\in V^{*u}$, and $\nu^s\in V^{*s}$,
\[ \begin{split}
\nu^u v^s = \nu^s v^u = 0.
\end{split} \]
The next lemma shows that $V^{*u}$ and $V^{*s}$ are in fact unstable and stable subspaces for the adjoint system.
Hence, the adjoint system is also hyperbolic, and the adjoint covariant subspaces are dual to the tangent ones.

\begin{lemma} [adjoint hyperbolicity] \label{l:adj hyp}
There are $C>0$, $0<\lambda<1$, such that for any $x\in K$, $\eta\in T^*_x\cM$,
$n\ge0$, we have
\[ \begin{split}
|{f^*}^{-n} \cP^u \eta|\le  C \lambda^n |\cP^u \eta| , 
\quad \textnormal{} \quad 
|{f^*}^{n} \cP^s \eta| \le  C \lambda^n |\cP^s \eta|.
\end{split} \]
\end{lemma}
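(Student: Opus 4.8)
The plan is to deduce the exponential contraction/expansion bounds for $f^*$ directly from the corresponding bounds for $f_*$, using the duality relations that define $\cP^u$, $\cP^s$ and the covector norm. The key observation is that $\cP^s\eta$ annihilates $V^u$: indeed $\cP^s\eta(v^u) = \eta(P^s v^u) = \eta(0) = 0$, so $\cP^s\eta \in V^{*s}$ is determined entirely by its action on $V^s$. Similarly $\cP^u\eta$ is determined by its action on $V^u$. This is what lets us transfer the estimates one subspace at a time.

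First I would establish the identity ${f^*}^n\cP^s\eta = \cP^s_{(n)}\, {f^*}^n\eta$ on the appropriate fibers, i.e. that pulling back commutes with the adjoint projections, which follows from the $f_*$-invariance of the splitting $V^s\oplus V^u$ and the defining relation $\eta(P^s w) = \cP^s\eta(w)$ applied along the orbit. Actually, for the norm estimate I only need: for $w\in T_x\cM$, $({f^*}^n\cP^s\eta)(w) = (\cP^s\eta)(f_*^n w)$ where the covector lives at $f^{-n}x$ — so I should be careful about base points and instead run the computation as $({f^*}^{n}\cP^s\eta)(w) = \cP^s\eta\big(f_*^{n} w\big)$, and since $\cP^s\eta$ kills the unstable part, this equals $\cP^s\eta\big(P^s f_*^{n}w\big) = \cP^s\eta\big(f_*^{n} P^s_{(-n)} w\big)$ using invariance of the splitting. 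Then $|f_*^n P^s w| \le C\lambda^n |P^s w| \le C\lambda^n \|P^s\|\,|w|$, and taking the supremum over $w$ with $|w|\le 1$ gives $|{f^*}^n\cP^s\eta|\le C'\lambda^n|\cP^s\eta|$ provided we also control $|\cP^s\eta|$ from below by $|P^s$-restricted data, which is where continuity and compactness of $K$ give a uniform constant. The unstable estimate for ${f^*}^{-n}\cP^u$ is the mirror image, using $|f_*^{-n}|V^u| \le C\lambda^n$.

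The main obstacle is bookkeeping the base points correctly — $f^*$ maps $T^*_{fx}\cM \to T^*_x\cM$, so ${f^*}^n$ and the projections $\cP^s$, $\cP^u$ at different points along the orbit must be matched up, and one must check that the constant $C$ and rate $\lambda$ can be taken uniform over $K$. Uniformity comes from continuity of the splitting $x\mapsto (V^s(x), V^u(x))$ together with compactness of $K$: the angle between $V^s$ and $V^u$ is bounded below, hence $\|P^s\|$, $\|P^u\|$, $\|\cP^s\|$, $\|\cP^u\|$ are all bounded above by a single constant, and comparing $|\cP^s\eta|$ with the norm of its restriction to $V^s$ introduces only another bounded factor. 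Absorbing all these bounded factors into a new $C$ (and keeping the same $\lambda$, or enlarging it slightly below $1$ if one prefers not to absorb constants into the rate) completes the proof. No genuinely new dynamical input is needed beyond the hyperbolicity of $f_*$ already assumed; the content is purely the linear-algebraic duality plus a compactness argument for uniformity.
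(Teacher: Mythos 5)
Your argument is correct and is essentially the paper's own proof: both use the duality $({f^*}^{n}\cP^s\eta)(w)=\cP^s\eta(f_*^{n}P^sw)$ (via invariance of the splitting and the fact that $\cP^s\eta$ annihilates $V^u$), then apply the $f_*$-hyperbolicity bound and the compactness-derived uniform bound on $\|P^s\|$. The only superfluous step is your worry about bounding $|\cP^s\eta|$ from below by its restriction to $V^s$ --- the estimate $|\cP^s\eta(f_*^nP^sw)|\le|\cP^s\eta|\,|f_*^nP^sw|$ already uses the full covector norm, so no such comparison is needed.
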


\begin{remark*}
(a)  
Notice that $f^*$ moves covectors backwards in time.
(b)
By this lemma, if we pullback $u$-many randomly-initialized covectors for many steps, 
they automatically occupy the unstable subspace,
since their unstable parts grow while stable parts decay.
\end{remark*}

\begin{proof}
Since $K$ is compact, there is constant $C>0$, such that for any $x_{-n}\in\cM$, any $w_{-n}\in T_{x_{-n}}\cM$, 
$|P^s w_{-n}| \le C|w_{-n}|$. Hence
\[ \begin{split}
  |{f^*}^{n} \cP^s \eta| 
  = \sup_{w_{-n}} \frac{|{f^*}^{n} \cP^s \eta(w_{-n})|} {|w_{-n}|}
  = \sup_{w_{-n}} \frac{ |\cP^s \eta( f_*^n P^s w_{-n})| }{ |w_{-n}|} \\
  \le C \lambda^n |\cP^s \eta| \sup_{w_{-n}} \frac{  |P^s w_{-n}|} {|w_{-n}|}
  \le C \lambda^n |\cP^s \eta| .
\end{split} \]
Here each $C$ is uniform on $K$, but its exact value may change from line to line.
The statement on $\cP^u$ can be proved similarly.
\end{proof}

\subsection{Pathwise adjoint shadowing lemma}
\hfill\vspace{0.1in}
\label{s:pathwiseasl}

In classical dynamical system theory, the shadowing lemma can be stated on a path on a hyperbolic set \cite{Wen2016}.
In particular, it does not assume axiom A nor the existence of physical measures.
In \cref{t:AS} we assume axiom A so that characterization (a) is more informative and we can prove (a) $\Rightarrow$ (b).
This subsections states a pathwise version of the lemma which does not assume axiom A, but then (a) is less informative.
For generality, we use the discrete topology on all steps of an orbit $\cO$, in particular, if $x_1=x_2$, they are still counted as two disjoint points.

\begin{lemma} [pathwise adjoint shadowing lemma] 
\label{l:pathasl}
On any orbit $\cO:=\{x_n\}_{n\in\Z}$ on a compact hyperbolic attractor, 
for any bounded covector sequence $\om\in \cX^*(\cO)$, or $\{\om_n\in T^*_{x_n} \cM\}_{n\in\Z}$,
the following characterizations of the shadowing covector sequence $\nu\in \cX^*(\cO)$ of $\om$ have relation (c) $\Leftrightarrow$ (b) $\Rightarrow$ (a).

  \begin{enumerate}
  \item 
  For any bounded vector sequence $X\in \cX(\cO)$,
  \[ \begin{split}
  \lim_{T\rightarrow\infty} 
  \frac 1{2T+1} 
  \sum _{n=-T}^T \om_n S(X)_n
  -
  \frac 1{2T+1} 
  \sum _{n=-T}^T \nu_n X_n
  =0.
  \end{split} \]
  \item 
  $\nu$ has the expansion formula given by a `split-propagate' scheme,
  \begin{equation*}
    \nu = \sum_{n\ge 0} f^{*n} \cP^s \om_n
    -  \sum_{n\le -1}  f^{*n} \cP^u \om_n\,.
  \end{equation*}
  \item 
  $\nu$ is the unique bounded solution of the inhomogeneous adjoint equation,
  \end{enumerate}
  \[ \begin{split}
  \nu_n = f^* \nu_{n+1} + \om_n.
  \end{split} \]
\end{lemma}

We define the pathwise adjoint shadowing operator, a linear operator $\cS:\cX^*(\cO) \rightarrow \cX^*(\cO)$, by $\cS(\om)=\nu$ in (b) or (c).
To get (b) from (a), we need more assumptions on the recurrence of all points on $\cO$.
For example, if $\cO$ is periodic, or if $\cO$ is a regular path for the physical measure and $\nu$ is continuous, then we can prove (a) $\Rightarrow$ (b): this is related to the statement of \cref{t:AS}.

The expansion of the shadowing covector in (b) is the `split-propagate' scheme applied on covectors.
That is, first decompose $\om$ according to the adjoint hyperbolicity, propagate stable components to the past, unstable components to the future; then $\nu$ is the summation of the covectors propagated to the step of interest.
This is similar to the conventional adjoint solution, $\nu'$, which is the sum of $\om$ from only the future.
Both procedures are illustrated in figure~\ref{f:nu}.

\begin{figure}[ht] \centering
  \includegraphics[scale=1]{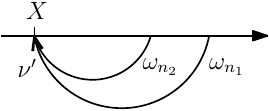}\hfill
  \includegraphics[scale=1]{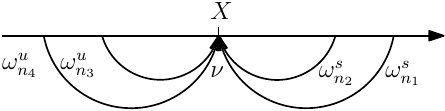}
  \caption{Illustrations for inhomogeneous adjoint solution with zero initial condition (left), and the shadowing covector (right).}
  \label{f:nu}
\end{figure}

The characterization in (c) is very similar to the conventional adjoint method.
The governing equation is the same, and the only difference is the extra boundedness requirement.
Boundedness is free in stable systems where all perturbations decay; in chaos boundedness requires more cost to achieve.
The similarity indicates that the adjoint shadowing covector is also related to the parameter derivatives of the dynamical system, which shall be more clear once we introduce the physical measure.

\begin{proof}
(b) $\Rightarrow$ (c).
  $\nu$ is bounded on the entire attractor $K$ and on every orbit,
  due to the exponential decay in the adjoint hyperbolicity.
  For the governing equation,
\[ \begin{split}
  & \nu  - f^* \nu_1\\
  =& \sum_{n\ge 0} f^{*n} \cP^s \om_n
  - \sum_{n\le -1}  f^{*n} \cP^u \om_n
  - \sum_{n\ge 0} f^{*n+1} \cP^s \om_{n+1}
  + \sum_{n\le -1}  f^{*n+1} \cP^u \om_{n+1} \\
  =& \sum_{n\ge 0} f^{*n} \cP^s \om_n
  - \sum_{n\le -1}  f^{*n} \cP^u \om_n
  - \sum_{k\ge 1} f^{*k} \cP^s \om_{k}
  + \sum_{k\le 0}  f^{*k} \cP^u \om_{k} \\
  =& \cP^s \om +\cP^u \om = \om.
\end{split} \]

(c) $\Rightarrow$ (b).
For uniqueness, pick any orbit and any two bounded inhomogeneous adjoint solutions, and let $\nu''$ be their difference.
Then $\nu''$ is a bounded homogeneous adjoint solution.
If $\nu''$ is not zero, then it grows exponentially fast either in the positive or negative time direction, so $\nu''$ can not be bounded.

(b) $\Rightarrow$ (a).
We want to show $\lim P_1-P_2=0$, where
\[ \begin{split}
  P_1 := \frac 1{2T+1} \sum_{n=-T}^T \om_n S(X)_n,
  \quad \textnormal{} \quad 
  P_2 := \frac 1{2T+1} \sum_{n=-T}^T \nu_n X_n.
\end{split} \]
Define a pivot quantity $P_3$, which is a sum involving only $\om_n$ and $X_n$ for $|n|\le T$,
\[ \begin{split}
  P_3 := \frac 1{2T+1} \sum_{n=-T}^T 
  \om_n \left( \sum_{0\le k \le n+T } f^{k}_{*} P^s X_{n-k} 
  - \sum_{n-T\le k\le-1} f^{k}_{*} P^u X_{n-k} \right),
\end{split} \]
Substitute the expression of $S(X)$ into $P_1$
\[ \begin{split}
  P_1
  = \frac 1{2T+1} \sum_{n=-T}^T 
  \om_n \left( \sum_{0\le k } f^{k}_{*} P^s X_{n-k} 
  - \sum_{ k\le-1} f^{k}_{*} P^u X_{n-k} \right),
\end{split} \]
Hence
\[ \begin{split}
  P_1 - P_3
  = \frac 1{2T+1} \sum_{n=-T}^T 
  \om_n \left( \sum_{k \ge n+T+1 } f^{k}_{*} P^s X_{n-k} 
  - \sum_{k\le n-T-1} f^{k}_{*} P^u X_{n-k} \right),\\
  \Rightarrow 
  |P_1 - P_3| 
  \le \frac 1{2T+1} \sum_{n=-T}^T 
  \left( \sum_{k \ge n+T+1 }C\lambda^{k}
  - \sum_{k\le n-T-1} C\lambda^{|k|} \right) \\
  \le C \frac 1{2T+1} \sum_{n=-T}^T \lambda^{n+T+1}+\lambda^{T+1-n}
  \le C \frac 1{2T+1} \frac 1{1-\lambda}
  \rightarrow 0,
  \quad \textnormal{as} \quad 
  T\rightarrow \infty.
\end{split} \]
The difference between $P_2$ and $P_3$ is similar.
\end{proof}

\subsection{Adjoint shadowing lemma}
\hfill\vspace{0.1in}
\label{s:december}

Once we have the physical measure $\rho$, we can define the product by integration of $\rho$, then we can prove (a) $\Rightarrow$ (b), so all three characterizations are equivalent.
Moreover, the new product gives an expression of a part of linear response.

\goldbach*

The unique existence of the adjoint operator in (a) follows from its equivalence to (b) and (c).
Since the operator in (a) is not in Hilbert spaces, we can not get its unique existence from the textbook knowledge on adjoint operators.
We may as well define $S$ and $\cS$ as operators on $L^2(\rho)$, but many statistical application still require Holder.

\begin{proof}
The proof of (b) $\Rightarrow$ (c) is the same as the pathwise \cref{l:pathasl}.

(c) $\Rightarrow$ (b).
For any point $x\in K$, if another vector field satisfies the adjoint equation but does not equal $\cS(\om)$, then it must go to infinity either in the past or the future of $x$. 
But this contradict our overall assumption that $\cS(\om)$ is continuous on $K$ so it must be bounded.

(b) $\Rightarrow$ (a).
First prove the duality.
We can prove via the pathwise version lemma, or somewhat similarly but more easily, substitute the expansion formula of $S$ in \cref{e:expand_v} into the shadowing contribution in~\cref{e:sc}, then move all operations to $\om$, apply the invariance of SRB measures, 
\[ \begin{split}
  \rho(\om S(X)) 
  = \rho\left( \om \sum_{n\ge0} f^{n}_{*} P^s X_{-n} 
  - \om \sum_{n\le-1} f^{n}_{*} P^u X_{-n} \right)\\
  = \rho\left( \sum_{n\ge0} (f^{*n} P^s \om) X_{-n} 
  - \sum_{n\le-1} (f^{*n} P^u \om) X_{-n} \right) \\
  = \sum_{n\ge0}\rho\left(  (f^{*n} P^s \om_n) X \right)
  - \sum_{n\le-1}\rho\left( (f^{*n} P^u \om_n) X\right)\\
  = \rho\left( (\sum_{n\ge0} f^{*n} P^s \om_n 
  - \sum_{n\le-1} f^{*n} P^u \om_n) X\right) 
  = \rho(\cS(\om) X) .
\end{split} \]

(a) $\Rightarrow$ (b).
If there is another $\cS'$ satisfying the duality in (a), then 
\[ \begin{split}
  \rho((\cS(\om)-\cS'(\om)) X)=0
\end{split} \]
for any $X\in \cX^\alpha$. 
Hence $\cS(\om)=\cS'(\om)$ $\rho$-almost everywhere.

Appendix~\ref{a:holder} proves that $\cS$ preserves Holder continuity.
\end{proof}

\subsection{Applications of adjoint shadowing lemma}
\hfill\vspace{0.1in}
\label{s:appdiscrete}

The three characterizations are useful for different purposes. 
Roughly speaking, (a) and (b) tell us what $\cS$ could be used for once we can compute it, whereas (c) tells us how to compute $\cS$.

Characterization (c) can be efficiently and conveniently recovered by nonintrusive shadowing algorithms \cite{Ni_nilsas}.
More specifically, to compute shadowing covector $\nu$ of $\om$, we solve the nonintrusive adjoint shadowing problem on an orbit of step  $0, 1, \ldots, N$,
\[ \begin{split} 
  \nu =  \nu ' + \ueps a \,,
  \quad  \mbox{s.t. }
  \ip{\nu_0, \ueps_0}=0.
\end{split}\]
Here $\nu'$ is the inhomogeneous adjoint solution;
$\ueps$ is a matrix with $u$ columns of homogeneous adjoint solutions with $\ueps_N$ randomly generated,
\[ \begin{split}
  \nu' = f^* \nu'_1 + \omega,
  \quad 
  \nu'_{N}=0;
  \quad \textnormal{} \quad 
  \ueps = f^* \ueps_1.
\end{split} \]
Intuitively, the main cause of the growth of $\nu'$ is that $\nu'-\nu$ contains unstable components.
During the pullback, the span of $\ueps$ becomes a good approximation of the unstable subspace, so we can tame the growth of $\nu'$ by subtracting a linear combination of columns of $\ueps$.
There are various ways to find the linear combination, for example, if $\nu\perp V^{*u}$ at step $0$, then $\nu$ can not have a significant unstable part most of the time.
The proof of the convergence of the algorithm is in \cite{Ruesha}.

The nonintrusive shadowing algorithm is easy to implement and efficient.
It is very similar to conventional backpropagation algorithms, where only one inhomogeneous adjoint solution, $\nu'$, is solved.
Nonintrusive shadowing runs the conventional backpropagation solver $u$ more times to get $\ueps$.
Solving extra adjoint equations are not as expensive as solving the first one, since the most expensive cost for solving these linear ODEs is loading the Jacobian matrix $f^*$.

Characterization (a) in the adjoint shadowing lemma tells us that we can use $\cS$ to compute $SC$ `adjointly' in the sense of utility, which means that major computations are away from $X$, so a new parameter (hence a new $X$) does not require much re-computation.
In numerical practice, `adjoint' also sometimes means the computation involves only the pull back of covectors: that is also true for this paper, but not for the equivariant divergence formula, which is the unstable part of the linear response.

When $u\ll M$, it is likely that the unstable projection tends to be small, so the unstable part of the perturbation tends to be small; moreover, if the system has fast decay of correlation, then $UC$ tends to be small, and $SC$ can be a good approximation of the entire linear response \cite{Ruesha}, which can be expressed by $\cS$.
However, we must notice that this estimation is based on statistical assumptions which are not universal, and we can construct many examples which violate these assumptions.
This estimation is mainly an explanation and expectation of what we observe in practice, for example in \cite{Ni_CLV_cylinder}. 
The morale is that, when $u\ll M$, we should notice that we can perhaps already get something useful with a simple extension of the numerical tools we already have.
When the unstable contribution is large, or when better accuracy is desired, for example near design optimal, we need to further compute the unstable contribution \cite{RepolhoCagliari2021,Lasagna2019,BloniganPhdThesis}.

Because the `split-propagate' scheme in (b) also appears in the expansion formula of $UC$, the adjoint shadowing lemma is also important for computing the unstable contribution. 
Our work on shadowing will not be wasted if we later want a precise algorithm for the entire linear response.
More specifically,
\[ \begin{split}
  UC = \lim_{W\rightarrow\infty} - \rho(\phi \frac{\delta^u \tL^u\sigma} \sigma),
  \quad\textnormal{where}\quad 
  \phi:=\sum_{m=-W}^W (\Phi\circ f^m -\rho(\Phi)).
\end{split} \]
Here $\sigma$ is the conditional measure on unstable manifolds, and $\delta^u \tL^u\sigma$ is the unstable perturbation of the transfer operator on $\sigma$, which has the equivariant divergence formula,
\[ \begin{split}
 - \frac{\delta^u \tL^u\sigma} \sigma =  \div^u_\sigma X^u = \cS(\div^v f_*) X + \div^v X,
 \quad \textnormal{where} \quad 
 X = \delta f\circ f^{-1}.
\end{split} \]
Roughly speaking, $\div^v$, the so-called `equivariant divergence', is the contraction by the unit unstable cube and its co-cube; the covector $\div^vf_*$ is the divergence of the Jacobian matrix $f_*$.
This formula is also `adjoint' in the utility sense, since major computations are away from $X$ and $\nabla X$.
This formula can be computed by $2u$ recursive relations on an orbit: this is in some sense the ergodic theorem or Monte-Carlo formula for the linear response.
The detailed theory for the equivariant divergence formula for discrete-time is in \cite{TrsfOprt}; the discrete-time algorithm and numerical examples are in \cite{far}.

\section{Preparations for continuous-time}
\label{s:prep2}

\subsection{Hyperbolicity and tangent shadowing}
\hfill\vspace{0.1in}
\label{s:hypershaF}

% basic setting
For continuous-time, let $f$ be the flow of $F+\gamma X$, where both $F$ and $X$ are smooth vector fields, $F$ is the base flow, $X$ is the perturbation, and the parameter $\gamma$ has base value zero.
Assume that 
\[ \begin{split}
  F\neq 0 \quad \textnormal{on } K.
\end{split} \]
Since the attractor $K$ is compact, $|F|$ is uniformly bounded away from zero and infinity.
Now $T_KM$ has a continuous $f_*$-invariant splitting into stable, unstable, and center subspaces, $T_KM = V^s \bigoplus V^u \bigoplus V^c$,
Where $V^c$ is the one-dimensional subspace spanned by $F$.
Should we have $F=0$ at some points, then the hyperbolicity of the entire system may be ruined and our works can be affected to various degrees: this requires some new techniques, which we shall discuss in \cref{s:future}.

The notations in continuous-time is more convoluted.
For an orbit $x_t=f^t(x_0)$, a homogeneous tangent equation of $e_t\in T_{x_t}\cM$ can be written in three ways:
\[ \begin{split}
  e_t  =f_*^t e_0 
  \quad\Leftrightarrow\quad \cL_F e = 0
  \quad\Leftrightarrow\quad \nabla_F e = \nabla_e F.
\end{split} \]
Here $\cL$ is the Lie-derivative and $\nabla$ is the Riemannian derivative.
The last expression is an ODE since $\nabla_F$ is typically denoted by $\partial/\partial t$ in $\R^M$.

To define the tangent shadowing operator, for any orbit $\cO$ with topology of $\R$, (that is, a self-cross is counted as two points), define $A(\cO)$ as the quotient space
\begin{equation} \begin{split} \label{e:shan}
  A(\cO):= \{ (v,\eta): v\in \cX^1(\cO), \,
  \eta \in C(\cO)
  \} \, /\, \sim.
\end{split} \end{equation}
Here $\cX^1(\cO)$ is the space of bounded vector fields continuously differentiable along $\cO$. 
The equivalent relation $\sim$ is defined as
\[ \begin{split}
(v_1,\eta_1)\sim (v_2,\eta_2)
\quad \textnormal{iff} \quad 
  \cL_F v_1 +\eta_1 F = \cL_F v_2 +\eta_2 F.
\end{split} \]
We can further define $A$ over $K$ as the quotient space
\[ \begin{split}
  A(K):=\{ (v,\eta): v\in \cX^\alpha (K), \,
  \nabla_F v\in  \cX^\alpha (K), \,
  \eta\in C^\alpha(K) \} \, /\, \sim.
\end{split} \]
Note that $A(K)$ has higher regularity than $A(\cO)$, since we can talk about better regularities on $K$.

We define the shadowing operator 
$ S_\cO: \cX(\cO) \rightarrow A(\cO)$
and $ S_K: \cX^\alpha(K) \rightarrow A_K$ as
\[ \begin{split}
S: X \mapsto [v, \eta],
\quad \textnormal{where} \quad 
\cL_F v + \eta F
= X .
\end{split} \]
We call $(v,\eta)$ a `shadowing pair' of $X$.
Here $[v,\eta]$ is the equivalent class of $(v,\eta)$ according to $\sim$.
The map is well-defined, due to the definition of $\sim$.
We sometimes omit the subscript of $S$ and bracket following $A$ and $\cX$, when there is no ambiguity, or when the argument applies to both cases.

Intuitively, $v$ points to the shadowing orbit and $\eta$ is the time-rescaling: this is intuitively explained in \cite[appendix C]{Ni_NILSS_JCP}, whose $\eta$ is the negative of this paper.
In flows, the shadowing orbit, which lies close to the original orbit, is unique as a set of points.
However, the points on shadowing orbits may not move at the same speed, and we need to reparameterize time, so that the points on shadowing orbits are close for all time.
This time reparameterization is bounded but not unique \cite[theorem 18.1.7]{Katok_thick_book}.
Hence the pair $(v,\eta)$ is not unique.
For example, adding $F$ to $v$ gives another shadowing pair.

We write out the expression of a specific shadowing pair that fits the definition,
\begin{equation} \begin{split} \label{e:haohao}
  \eta F = P^c X,
  \quad \textnormal{} \quad 
  v 
  = \int_{t\ge0} f^{t}_{*} P^s X_{-t} 
  - \int_{t\le0} f^{t}_{*} P^u X_{-t} .
\end{split} \end{equation}
It follows that $\eta = \eps^c X$ so it is Holder on $K$, where $\eps^c$ is the unit center covector;
$v$ is Holder due to \cref{a:holder}.
Then $\cL_F v = X - \eta F$ is Holder.
Since $\nabla_v F$ is Holder, $\nabla_F v = \cL_F v +\nabla_v F $ is also Holder.
Hence, $[v,\eta] \in A(K)$.

% how to compute
The nonintrusive shadowing algorithm has been applied to a $4\times10^6$ dimensional system in computational Navier-Stokes equations \cite{Ni_NILSS_JCP,Ni_fdNILSS,Ni_CLV_cylinder}.
It does not solve for the shadowing pair in~\cref{e:haohao}, rather, it solves for the orthogonal projection of $v$, and the integration of the corresponding $\eta$.
More specifically, it solves
\[ \begin{split} 
  v = v' + \sum_{i=1}^u e^i a_i \,,
  \quad \mbox{s.t. }
  \ip{v_T^\perp, e^{i\perp}_T}=0
  \quad \textnormal{for all} \quad 
  1\le i\le u.
\end{split}\]
Here $v'$ is a particular inhomogeneous adjoint solution, and $(\cdot)^\perp$ is the orthogonal projection onto the subspace perpendicular to $F$.
It was not very clear that its result is the same with other shadowing pairs and gives a significant part of (but not the whole) the linear response.
Section~\ref{s:sudcomp} shall provide positive answers.

\subsection{Physical measure and linear response}
\hfill\vspace{0.1in}
\label{s:S}

Under similar assumptions as discrete-time, the attractor admits a physical SRB measure $\rho$, and similar linear response theorem has been proved \cite{Ruelle_diff_flow,Dolgopyat2004}.
In particular, the linear response in continuous-time is the same,
\begin{equation} \begin{split} \label{e:meiguo}
  \delta \rho (\Phi)
  = \lim_{T\rightarrow\infty} \rho\left( \int_{0}^T X(\Phi_t) dt \right) ,
\end{split} \end{equation}
assuming the decay of correlation, 
\begin{equation}
\label{e:deep}
  \int_0^\infty \rho  (\Phi_t C) dt < \infty.
\end{equation}
Here $\Phi_t = \Phi\circ f^t$, $C=\div^{cu}_\sigma X^{cu}$ is the submanifold divergence on the center-unstable manifold under conditional SRB measure.

The shadowing/unstable decomposition of the linear response in continuous-time is a little tricky since it is not apparent which part should have the center direction.
This issue is resolved in section~\ref{s:sudcomp}.
We shall also assume another decay of correlation
\begin{equation} \begin{split} \label{e:published}
    \lim_{T\rightarrow\infty} \rho\left( \eta \Phi_T \right) = \rho(\eta)\rho(\Phi)
    \quad \textnormal{for any Holder continuous } \eta .
\end{split} \end{equation}
Both decorrelation can be proved under various assumptions \cite{Liverani04,Dolgopyat1998}, but we shall just assume them in this paper.

\section{Shadowing/unstable decomposition for continuous-time}
\hfill\vspace{0.1in}
\label{s:sudcomp}

We define the shadowing/unstable decomposition for the linear response of flows.
Due to the presence of the center direction and the non-uniqueness in the shadowing pair, this decomposition is not straightforward and was not previously defined.
When defining the shadowing contribution, we want to give positive answers to the two historically lingering questions: 
\begin{enumerate}
  \item Do different shadowing pairs in the same equivalent class in $A$, defined in \cref{e:shan}, give the same shadowing contribution?
  \item Is the shadowing contribution a significant part of the linear response?
\end{enumerate}

\subsection{Well-definedness of a product between \texorpdfstring{$A$}{A}
and \texorpdfstring{$\cA$}{curly A}}
\hfill\vspace{0.1in}
\label{s:sleep}

On a hyperbolic set, for an orbit $\cO$ on with real-line topology, define the space of pairs of a covector field and a scalar function differentiable along $F$,
\begin{equation} \begin{split} \label{e:yy}
  \cA (\cO):=\{(\om, \psi) \,|\, \om\in \cX^{*}(\cO), \psi\in C^1(\cO), F(\psi) = \om(F)\}.
\end{split} \end{equation}
Here $\cX^*$ is the space of continuous bounded vector fields on $\cO$.
We can also define $\cA$ over $K$ with higher regularity
\[ \begin{split}
  \cA (K):=\{(\om, \psi) \,|\, 
  \om\in \cX^{*\alpha}(K), 
  \psi\in C^\alpha(K), 
  F(\psi) \in C^\alpha(K), 
  F(\psi) = \om(F)\}.
\end{split} \]
The main case in this paper satisfies $\om=d\psi$ and hence the constraint in $\cA$, but there are cases where the differential holds only in the flow direction, for example in $UC$: this will be clear in a forthcoming paper.

The products between $[v,\eta]\in A$ defined in \cref{s:hypershaF}, and $(\om,\psi)\in \cA$ are
\[ \begin{split}
  \llangle [v,\eta];\om,\psi\rrangle _{\cO,T}
  :=\frac 1{2T} \int_{-T}^T \om v - \eta \psi dt ,
  \quad \textnormal{} \quad 
  \llangle [v,\eta];\om,\psi\rrangle _\cO
  := \lim_{T\rightarrow\infty}
  \llangle [v,\eta];\om,\psi\rrangle _{\cO,T},\\
\quad \textnormal{} \quad 
  \llangle [v,\eta];\om,\psi\rrangle _K 
  := \rho (\om v) - \rho(\eta \psi).
\end{split} \]

We first prove that the product is well-defined.
Hence, very roughly speaking, $A$ and $\cA$ are dual under the product: their `dimensions' match, since $A$ is $\cX \times C$ modulo an equivalent relation, and $\cA$ is $\cX \times C$ with a constraint.
However, for $A$ and $\cA$ to be true dual, we have to reduce the regularity from Holder to dual regularities; we shall not do that in this paper.

\begin{lemma} \label{l:xue}
For $(\om,\psi)\in\cA$, all $(v,\eta)$ in a equivalent class in $A$ give the same product $\llangle v,\eta; \om,\psi\rrangle$ on $\cO$ and on $K$.
Hence $\llangle S(X); \om,\psi\rrangle$ is also well-defined.
\end{lemma}

\begin{proof}
  Let $(v_1,\eta_1)$ and $(v_2,\eta_2)$ be two shadowing pairs of $X$.
  Denote $(\cdot)'':=(\cdot)_1-(\cdot)_2$.
  By definition, $v''$ and $\eta''$ are both bounded, and
  \[ \begin{split}
    \cL_F v'' = 
    \nabla_F  v'' - \nabla_{ v''} F
    = - \eta'' F.
  \end{split} \]

  Decompose $v'' = v^{su}+v^c$, where $v^{su}\in V^{su} := V^s\bigoplus V^u$, $v^c\in V^c$.
  Due to the invariance of the subspaces, $\cL_F v^{su} \in V^{su}$, $\cL_F v^{c} \in V^{c}$, so $\cL_F v^{su}=0$.
  However, due to the exponential growth, $v^{su}$ must be zero, otherwise it grows unbounded.
  Hence, $v'' \in V^c$, so we can write it as 
  \[ \begin{split}
   v'' = -hF ,
   \quad \textnormal{where} \quad 
   \partial h/\partial t = \eta''.
  \end{split} \]
  By assumptions, $v''$ is bounded, $|F|>C>0$, so $h$ is bounded.
  This basically means that $\eta''$ has a zero average, otherwise $h$ would be unbounded.

  To prove the lemma on an orbit $\cO$, note the difference in the product is
  \[ \begin{split}
   \llangle v'', \eta''; \om, \psi \rrangle 
    = \lim_{T\rightarrow\infty} \frac 1{2T} \left( \int_{-T}^T(\om \, v'') dt 
    - \int_{-T}^T \psi \eta'' dt 
     \right).
  \end{split} \]
  Since $\om v'' = -h\om F = -hF(\psi) = -h \partial\psi/\partial t$,
  \[ \begin{split}
   \llangle v'', \eta''; \om, \psi \rrangle 
    = \lim_{T\rightarrow\infty} \frac 1{2T} \left( - \int_{-T}^T h (\partial \psi/\partial t) dt 
    - \int_{-T}^T \psi (\partial h/\partial t) dt  \right)\\
    = \lim_{T\rightarrow\infty} \frac 1{2T} \left( h(-T)\psi(-T)- h(T)\psi(T) \right)
    = 0,
  \end{split} \]
  since $h$ is bounded.

  To prove the lemma on $K$, take an orbit such that its long-time-average of $\om v$ and $\psi\eta$ equal their integrations to $\rho$, and change the lower bound of integration from $-T$ to time zero.
\end{proof}

\subsection{Unstable contribution is in unstable subspace}
\hfill\vspace{0.1in}
\label{s:waiyi}

\begin{definition}
\label{d:scflow}
On a compact mixing axiom A attractor with physical measure $\rho$ and decay of correlations in \cref{e:deep,e:published}, 
define the shadowing contribution in the linear response of flows as 
\[ \begin{split}
  SC :=\llangle S(X); d\Phi, \Phi -\rho(\Phi) \rrangle_K,
\end{split} \]
where $X=\delta f$.
\Cref{l:xue} shows that this is well-defined.
\end{definition}

We want the leftover part to be in the unstable direction.
If so, we can recycle the dimension argument in~\cite{Ruesha}, which is also reviewed in \cref{s:appdiscrete}, to say that the shadowing contribution could be a good approximation of the entire linear response when the unstable ratio is low.
We do not want $UC$ to involve the flow direction, which could be more easily captured than unstable and stable directions.
For example, a perturbation in the time equals a perturbation in the flow direction.
If $UC$ precludes the flow direction, it is more likely to be small, and $SC$ alone is more likely to be a good approximation of the linear response: sometimes this is indeed the case in practice, such as in \cite{Ni_CLV_cylinder}.

\begin{theorem} [shadowing/unstable decomposition of flows] \label{t:region}
\Cref{d:scflow} equals
  \[ \begin{split}
    SC
    = 
    \int_{0}^\infty \rho\left(X^{sc}(\Phi_t) \right) dt
    - \int_{-\infty}^0 \rho\left(X^u(\Phi_t)  \right) dt.
  \end{split} \]
  Hence, the unstable contribution, the leftover part of the linear response in \cref{e:meiguo}, is
  \[ \begin{split}
    - UC := 
    \delta \rho(\Phi) - SC 
    = \int_{-\infty}^\infty \rho\left(  X^u(\Phi_t) dt \right).
  \end{split} \]
\end{theorem}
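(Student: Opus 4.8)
The plan is to compute $SC = \llangle S(X); d\Phi, \Phi - \rho(\Phi) \rrangle$ directly by substituting the explicit shadowing pair from \eqref{e:haohao} into the definition of the product $\llangle \cdot ; \cdot \rrangle = \rho(\om v) - \rho(\eta\psi)$, with $\om = d\Phi$, $\psi = \Phi - \rho(\Phi)$, so that $\om(F) = F(\psi) = F(\Phi) = d\Phi(F)$. By lemma~\ref{l:xue} we are free to use this particular pair. Here $\eta F = P^c X$, i.e.\ $\eta = \eps^c(X)$ in the notation of theorem~\ref{t:ASF} (the center covector with $\eps^c(F)=1$), and $v = \int_{t\ge0} f^t_* P^s X_{-t}\,dt - \int_{t\le 0} f^t_* P^u X_{-t}\,dt$.

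First I would handle the $\rho(d\Phi\, v)$ term. Moving the pushforwards onto $d\Phi$ via the adjoint relation and using invariance of $\rho$ (exactly the manipulation already carried out in the proof of (b)$\Rightarrow$(a) of theorem~\ref{t:AS}), one converts $\rho(d\Phi\, f^t_* P^s X_{-t})$ into $\rho((f^{*t}P^s d\Phi)\, X_{-t}) = \rho((f^{*t}P^s d\Phi)_t\, X) = \rho((P^s d\Phi)(f^t_* X))$, and then recognizes $f^t_* X$ acting on the stable/unstable part of $d\Phi$ as $d\Phi$ acting on the stable/unstable part of the propagated vector; since $d(\Phi_t) = f^{*t} d\Phi$ this reads $\rho(d\Phi_t(P^s X)) = \rho((P^s X)(\Phi_t))$, wait — more carefully, after redistributing projections one gets $\int_{t\ge 0}\rho\big((P^s d\Phi)_{\,}(f^t_* X)\big)\,dt$, which equals $\int_{t\ge 0}\rho\big(d\Phi\, (f^t_* P^{?} X)\big)$; the clean endpoint is $\int_0^\infty \rho(X^s(\Phi_t))\,dt - \int_{-\infty}^0 \rho(X^u(\Phi_t))\,dt$, the stable part integrated forward and the unstable part integrated backward. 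The second term, $-\rho(\eta\psi)$: since $\eta F = P^c X$ is supported in the flow direction, I would use the identity $P^c X = \eps^c(X)\,F$ together with the relation (from \eqref{e:published}-type decay of correlation and $F(\psi) = \Phi$ composed appropriately) that lets one write $\rho(\eta \psi)$ as $\int_0^\infty \rho(X^c(\Phi_t))\,dt - \int_{-\infty}^0 \rho(X^c(\Phi_t))\,dt$ collapsed onto a symmetric principal-value integral that contributes the $X^c$-in-forward-direction piece to $SC$; combining with the stable piece gives $X^{sc}$ forward. Assembling both terms yields
\[
  SC = \int_0^\infty \rho\!\left(X^{sc}(\Phi_t)\right) dt - \int_{-\infty}^0 \rho\!\left(X^u(\Phi_t)\right) dt.
\]
Finally, $UC := \delta\rho(\Phi) - SC$; inserting the linear response formula \eqref{e:meiguo}, $\delta\rho(\Phi) = \lim_{T\to\infty}\rho(\int_0^T X(\Phi_t)\,dt)$, and noting that $\int_0^\infty \rho(X^{sc}(\Phi_t))\,dt$ is the convergent forward tail of the $X^{sc}$-part while $X = X^s + X^u + X^c$, the $X^{sc}$ forward integrals cancel against the corresponding pieces of $\delta\rho(\Phi)$, leaving only $\int_{-\infty}^\infty \rho(X^u(\Phi_t))\,dt$. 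The convergence of all these integrals is guaranteed by hyperbolicity: $X^s$ decays forward, $X^u$ decays backward, and $\rho(X^u(\Phi_t))$ for $t\to+\infty$ decays by \eqref{e:deep}/\eqref{e:published}.

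The main obstacle I anticipate is the bookkeeping around the center direction. Unlike the stable and unstable parts, $X^c$ has zero Lyapunov exponent, so the forward and backward integrals $\int_0^\infty \rho(X^c(\Phi_t))\,dt$ and $\int_{-\infty}^0 \rho(X^c(\Phi_t))\,dt$ do not individually converge; one must show that the combination coming out of $\rho(d\Phi\, v) - \rho(\eta\psi)$ is a genuinely convergent (principal-value) quantity and that it matches, after regrouping, the $X^{sc}$ forward minus nothing backward structure — i.e.\ that the center contribution ends up entirely on the forward/shadowing side with no leftover in $UC$. The integration-by-parts identity $\om v'' = -h\,\partial\psi/\partial t$ used in lemma~\ref{l:xue}, applied now with the genuine (nonzero) $\eta$ rather than a difference $\eta''$, is the right tool: it exchanges $\rho(\eta\psi)$ against boundary-type terms that telescope into the center part of $\int \rho(X(\Phi_t))\,dt$. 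Making this rigorous while respecting that $h$ is only bounded (not decaying) is the delicate point; the decay-of-correlation hypothesis \eqref{e:published} is exactly what rescues the argument, just as it did in lemma~\ref{l:xue}.
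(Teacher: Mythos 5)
Your overall strategy is the paper's: fix the particular pair from \eqref{e:haohao}, convert the stable/unstable part of $\rho(d\Phi\,v)$ into $\int_0^\infty\rho(X^s(\Phi_t))\,dt-\int_{-\infty}^0\rho(X^u(\Phi_t))\,dt$ by invariance of $\rho$, handle the center term separately, and invoke lemma~\ref{l:xue} for pair-independence; the final subtraction giving $UC=\int_{-\infty}^\infty\rho(X^u(\Phi_t))\,dt$ is also correct. The gap is in the center direction, which is the actual content of the theorem. You describe it as a ``symmetric principal-value integral'' and assert that $\int_0^\infty\rho(X^c(\Phi_t))\,dt$ and $\int_{-\infty}^0\rho(X^c(\Phi_t))\,dt$ do not individually converge, so that some regularization is needed. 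That is not the situation here, and the regularization you gesture at is never actually constructed. Under \eqref{e:published} the forward center integral converges on its own, by an elementary telescoping: since $\eta F=P^cX=X^c$ and $F(\Phi_t)=\partial\Phi_t/\partial t$,
\[ \begin{split}
  \int_0^T\rho\left(X^c(\Phi_t)\right)dt
  =\rho\left(\eta\int_0^T\pp{\Phi_t}{t}\,dt\right)
  =\rho\left(\eta(\Phi_T-\Phi)\right)
  \xrightarrow[T\to\infty]{}\rho(\eta)\rho(\Phi)-\rho(\eta\Phi)
  =-\rho(\eta\psi),
\end{split} \]
the limit being exactly the decay-of-correlation hypothesis \eqref{e:published} applied to $\rho(\eta\Phi_T)$. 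This single identity (equation \eqref{e:duang} in the paper) is what matches the center forward integral with the $-\rho(\eta\psi)$ term of the product and puts the whole center contribution on the shadowing side; no principal value, no cancellation between forward and backward halves, and no integration by parts against a bounded antiderivative $h$ is required (that device belongs to lemma~\ref{l:xue}, where $\eta''$ has zero average; the genuine $\eta$ here need not, and the argument is different).

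To close the gap you should replace the principal-value paragraph with the computation above, and note that the same telescoping applied on $[-T,0]$ gives $\int_{-\infty}^0\rho(X^c(\Phi_t))\,dt=\rho(\eta\Phi)-\rho(\eta)\rho(\Phi)$, whence $\int_{-\infty}^\infty\rho(X^c(\Phi_t))\,dt=0$; this is also what legitimizes your final cancellation step, since it shows the $X^c$ piece of $\delta\rho(\Phi)$ in \eqref{e:meiguo} is precisely the $X^c$ piece you placed inside $SC$, leaving $UC$ purely unstable.
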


\begin{proof}
  First let $v$ and $\eta$ be the specific shadowing pair in \cref{e:haohao}.
  By the invariance of $\rho$, the stable and unstable part of the expression of $SC$ in the lemma equals
  \[ \begin{split}
    \int_{0}^\infty \rho\left(X^{s}(\Phi_t) \right) dt
    - \int_{-\infty}^0 \rho\left(X^u(\Phi_t)  \right) dt \\
    = \rho\left( \left(\int_{0}^\infty f_*^t X^{s}_{-t}dt - \int_{-\infty}^0  f_*^t X^u_{-t} dt \right) (\Phi)\right) 
    = \rho(v(\Phi)).
  \end{split} \]
  For the center direction, note $F=\partial/\partial t$ and the decorrelation in \cref{e:published}, 
  \begin{equation} \begin{split} \label{e:duang}
    \int_{0}^\infty \rho\left(X^{c}(\Phi_t) \right) dt
    = \lim_{T\rightarrow\infty} \rho \left( \eta \int_{0}^T F(\Phi_t) dt \right)  
    = \lim_{T\rightarrow\infty} \rho \left( \eta \int_{0}^T \pp{\Phi_t}t dt \right)  \\
    = \lim_{T\rightarrow\infty} \rho \left( \eta (\Phi_T-\Phi) \right) 
    = \rho (\eta)\rho(\Phi) - \rho (\eta\Phi)
    = - \rho (\eta(\Phi - \rho(\Phi))).
  \end{split} \end{equation}
  Hence the expression in the lemma equals the definition of $SC$ in \cref{d:scflow} with the specific $v$ and $\eta$.
  By lemma~\ref{l:xue}, we further know that the choice of the shadowing pair does not matter.
\end{proof}

By \cref{e:duang}, it turns out the center direction has zero average
\[ \begin{split}
  \int_{-\infty}^\infty \rho\left(  X^{c}(\Phi_t) dt \right) = 0
\end{split} \]
Hence, $UC$ has another expression involving the center direction, which is convenient for deriving the equivariant divergence formula of continuous-time: this will be clear in a forthcoming paper.
\begin{proposition} \label{l:ppp}
\[ \begin{split}
  - UC := 
  \delta \rho(\Phi) - SC 
  = \int_{-\infty}^\infty \rho\left(  X^u(\Phi_t) dt \right) 
  = \int_{-\infty}^\infty \rho\left(  X^{cu}(\Phi_t) dt \right) .
\end{split} \]
\end{proposition}

\section{Adjoint shadowing for continuous-time}
\label{s:adj2}

This section develops the adjoint shadowing theory for continuous-time systems.
The main issue is the extra center subspace given by the flow direction.
There is no exponential decay in the center subspace, but we can still obtain similar results.

\subsection{Adjoint hyperbolicity}
\hfill\vspace{0.1in}
\label{s:adjhyperF}

% define f^* 
With some extra notations, we have the same results as the discrete-time.
Define the Riemannian derivative along a covector by:
\begin{equation} \begin{split} \label{e:deng}
  \nabla_\om X :=  \nabla_X\om - \cL_X\om ,
\end{split} \end{equation}
where $\cL$ is the Lie derivative.
By this definition,
\[ \begin{split}
  (\nabla_\om X) Y = (\nabla_X\om)Y - (\cL_X\om)Y
  = \nabla_X(\om Y) -(\nabla_XY)\om - \cL_X(\om Y) + (\cL_XY)\om .
\end{split} \]
Since $\om Y$ is a scalar function, its Lie derivative equals Riemannian derivative, hence,
\begin{equation} \begin{split} \label{e:hey}
  (\nabla_\om X) Y 
  =  (\cL_XY- \nabla_XY)\om
  = - \om \nabla_YX.
\end{split} \end{equation}
For an orbit $x_t=f^t(x_0)$, a homogeneous adjoint equation of $\eps_t\in T^*_{x_t}\cM$ can be written in three ways:
\[ \begin{split}
  f^{*t} \eps_t  = \eps_0 
  \quad\Leftrightarrow\quad \cL_F \eps = 0
  \quad\Leftrightarrow\quad \nabla_F \eps = \nabla_\eps F.
\end{split} \]
Here the pullback operator $f^{*t}$ is the adjoint of $f^t_*$.
The last expression is an ODE.

Define the adjoint projection operator, $\cP^u$, $\cP^s$, and $\cP^c$, similarly as discrete-time.
As lemma~\ref{l:adj hyp}, we can also show that their image spaces hyperbolically split the cotangent space.
In particular, since $F$ is non-zero on a compact set $K$,
\[ \begin{split}
  |{f^*}^{t} \cP^c \eta|\le  C |\cP^c \eta| ,
  \quad \textnormal{ for any }
  t\in \R.
\end{split} \]

\subsection{Pathwise adjoint shadowing lemma}
\hfill\vspace{0.1in}
\label{s:pathaslF}

\begin{lemma} [pathwise adjoint shadowing lemma] 
\label{l:pathaslF}
On an orbit $\cO:=\{x_t\}_{t\in\R}$ on a compact hyperbolic attractor, 
for any $(\om,\psi)\in \cA(\cO)$ defined in \cref{e:yy}, the following characterizations of the shadowing covector $\nu\in \cX^*(\cO)$ of $\om$ have relation (c) $\Leftrightarrow$ (b) $\Rightarrow$ (a).
\begin{enumerate}
\item 
  For any $X\in \cX (\cO)$, let $S:\cX(\cO) \rightarrow A(\cO)$ be the (tangent) shadowing operator, then
  \[ \begin{split}
  \lim_{T\rightarrow\infty} 
  \llangle \om, \psi ; S(X) \rrangle _{\cO,T}
  -
  \frac 1{2T} \int_{-T}^T X \nu dt
  =0.
  \end{split} \]
\item 
  $\nu$ has the expansion formula given by a `split-propagate' scheme,
  \begin{equation*}
    \nu = \int_{t\ge 0} f^{*t} \om^s_t dt
    -  \int_{t\le 0} f^{*t} \om^u_t dt
    - \psi \eps^c,
  \end{equation*}
\item 
  $\nu$ is the unique bounded solution of the inhomogeneous adjoint equation,
  \[ \begin{split}
  \nabla_F \nu - \nabla_\nu F 
  = \cL_F\nu
  = - \omega 
  \;\textnormal{ on } \cO,
  \quad \textnormal{} \quad 
  \nu_\tau(F_\tau) = -\psi_\tau
  \;\textnormal{ for all or any } \tau.
  \end{split} \]
\end{enumerate}
\end{lemma}

We define pathwise adjoint shadowing operator, a linear operator $\cS:\cA(\cO) \rightarrow \cX^{*} (\cO ) $, by $\cS(\om,\psi)=\nu$ in (b) or (c).
By (b) and (c), if $\nu$ satisfies the ODE for all time, and $\nu F = -\psi$ at one time, then $\nu F = -\psi$ for all time.

The expression in (b) is similar to the discrete-time case but has the extra center direction.
It is not apriorily clear that the extra center direction assembles well with the other directions, as now we have in (c).

In $\R^M$ the ODE in (c) is the conventional adjoint equation, $-\dd \nu t = DF^T\nu +\om$.
To see the similarity of (c) with discrete-time,  note that $f^*\nu_1 - \nu$ is the time-1 approximation of $\cL_F\nu $.

\begin{proof}
(b) $\Rightarrow$ (c).
  Boundedness is due to the adjoint hyperbolicity.
  For the governing equation, first write the expression of $\nu$ at $\tau$,
\[ \begin{split}
  \nu_\tau
  = \int_{t\ge \tau} f^{*t-\tau} \om^s_t dt
  - \int_{t\le \tau} f^{*t-\tau} \om^u_t dt
  - \psi_\tau \eps^c_\tau,
  \end{split} \]
  Note that for fixed $t$, $f^{*t-\tau}\om^s_t$ is a homogeneous adjoint solution, so
  \[ \begin{split}
    \nabla_\pp{} \tau f^{*t-\tau}\om^s_t
    = \nabla_{f^{*t-\tau}\om^s_t} F_\tau
  \end{split} \]
  We may also use Lie derivative to write this proof, but the notations with $\nabla$ is more consistent with the backpropagation, or the pathwise perturbation equation, in $\R^M$.
  Further note that the lower bound of the integration interval is $\tau$, so 
  \[ \begin{split}
    \nabla_\pp{} \tau \int_{t\ge \tau} f^{*t-\tau} \om^s_t dt
    =  \nabla_{\int_{t\ge \tau} f^{*t-\tau} \om^s_t dt} F_\tau -\om^s_t .
  \end{split} \]
  Similarly,
  \[ \begin{split}
    \nabla_\pp{} \tau \int_{t\le \tau} f^{*t-\tau} \om^u_t dt
    = \nabla_{\int_{t\ge \tau} f^{*t-\tau} \om^u_t dt} F_\tau + \om^u_t .
  \end{split} \]

  For the center direction, $\eps^c$ is also homogeneous, 
  \[ \begin{split}
    \nabla_\pp{} \tau \psi \eps^c 
    = \psi \nabla_\pp{} \tau \eps^c + \eps^c F(\psi) 
    = \psi \nabla_ {\eps^c } F + \eps^c \om(F).
  \end{split} \]
  For the last term, note that an element in the one-dimensional center subspace $V^{*c}$ is uniquely determined by its product with $F$. Since $\om(F)\eps^c \in V^{*c}$ and $\om(F)\eps^c(F) = \om(F)=\om^c(F)$, it follows that $\om(F)\eps^c = \om^c$.
  Hence,
  \[ \begin{split}
    \nabla_\pp{} \tau \psi \eps^c 
    = \nabla_ {\psi\eps^c } F + \om^c.
  \end{split} \]

  To summarize, $\nu$ satisfies the ODE
  \[ \begin{split}
  \nabla_F \nu
    = \nabla_{\int_{t\ge \tau} f^{*t-\tau} \om^s_t dt} F_\tau -\om^s_t 
    - \nabla_{\int_{t\ge \tau} f^{*t-\tau} \om^u_t dt} F_\tau - \om^u_t 
    - \nabla_ {\psi\eps^c } F - \om^c 
    = \nabla_\nu F -\om.
  \end{split} \]
To check the extra constraint, just notice that $f^{*t} \om^s\in V^{*s}$, so the integrations in $\nu$ has zero product with $F$.
So $\nu(F) = \psi \eps^c(F) = \psi$ for all time: this is the stronger version of (c).

(c) $\Rightarrow$ (b).
For uniqueness, assume the weaker version of (c), that is, pick any orbit and any two bounded solutions of the ODE and $\nu_\tau(F_\tau) = \psi_\tau$ at an arbitrary time $\tau$,
Then their difference $\nu''$ is a bounded homogeneous adjoint solution of $\cL_F \nu''=0$ such that $\nu''_\tau(F_\tau)=0$.
If $\nu''$ is not zero in the stable or unstable direction, then it can not be bounded.
Hence, $\nu'' = C\eps^c$ for some constant $C$.
But $\nu''=0$ at $\tau$, so the difference is always zero.

(b) $\Rightarrow$ (a).
Use the specific shadowing pair in \cref{e:haohao}.
The stable and unstable part is the same as discrete time.
For the center part, notice
\[ \begin{split}
  \eta = \eta F \eps^c = (P^c X) \eps^c = \eps^c X.
\end{split} \]
Also note $\nu^c = -\psi \eps^c$, so $ - \eta \psi = \nu^c X$ for all time.
%and for any $T>0$,
%\[ \begin{split}
  %\int_{-T}^T - \eta \psi + \psi\eps^c X dt =0.
%\end{split} \]
\end{proof}

\subsection{Adjoint shadowing lemma}
\hfill\vspace{0.1in}
\label{s:aslF}

\silverbach*

By (a), we can say that $\cS$ is the adjoint of $S$.
We still have unique existence of $\cS$, but we do not prove that from from the standard adjoint operator theory for two reasons.
First, the standard theory is a bit technical to invoke for $A$ and $\cA$ to be rigorously dual; 
neither does it provide much benefit, in particular, it does not immediately give the Holder continuity.
Since we have (b) and (c), we have a more straightforward way to get unique existence and better regularity.

\begin{proof}[Proof of \cref{t:ASF}]

(b) $\Rightarrow$ (c). Same as the pathwise case.

(c) $\Rightarrow$ (b).
Assume $\nu\in \cX^*(K)$ satisfies the weaker version of (c), that is, $\cL_F\nu = -\om$ on $K$ and $\nu F = -\psi$ for one $x\in K$.
First, we can check that $\cS(\om,\psi)$ indeed satisfies all the conditions in (c), so the set of potential $\nu$'s is not empty.
Here $\cS$ temporarily refers to only the expression in (b).

Since $K$ is mixing axiom A, let $\cO$ be a transitive orbit on $K$, that is, all points on $K$ are limit points of $\cO$.
We can see that $\nu = \cS(\om, \psi) + C\eps^c$ for some constant $C$ on $\cO$.
Otherwise $\nu$ would grow unbounded, violating the overall assumption that $\nu\in \cX^*(K)$, or $\nu$ would not satisfy the center direction of the ODE (see the proof of the pathwise case).
Hence,  $\nu F = -\psi + C$ on $\cO$.

Note that $\cO$ does not necessarily contain $x$.
But $x$ is a limit point of $\cO$, so there are $\{x_n\}_{n\ge0} \subset \cO$ such that $x_n\rightarrow x$.
Note that by assumptions, $\nu, F$ and $\psi$ are continuous on $K$, so 
\[ \begin{split}
\nu F (x) \leftarrow \nu F (x_n) = -\psi (x_n) + C \rightarrow -\psi(x) + C.
\end{split} \]
By assumption, $C=0$.
So $\nu$ could only be $\cS(\om,\psi)$.

(b) $\Rightarrow$ (a).
Similar to theorem~\ref{t:region}, use the special pair in \cref{e:haohao},
\[ \begin{split}
  \llangle S(X); \om, \psi \rrangle
  = \int_{0}^\infty \rho\left(\om f_*^t X^s_{-t} \right) dt
  - \int_{-\infty}^0 \rho\left(\om f_*^t X^u_{-t} \right) dt
  - \rho\left(\eta \psi \right) \\
  = \int_{0}^\infty \rho\left(X^{s}(f^{*t} \om_t) \right) dt
  - \int_{-\infty}^0 \rho\left(X^u(f^{*t} \om_t)  \right) dt
  - \rho\left(X \psi\eps^c \right)
  = \rho(S(\om,\psi)X).
\end{split} \]

(a) $\Rightarrow$ (b).  Same as discrete-time.

The Holder condition of $\nu^{su}$ follows from appendix~\ref{a:holder};  $\nu^c = -\psi \eps^c$ is Holder, so $\nu$ is also Holder.
\end{proof}

\subsection{Applications of adjoint shadowing lemma and discussions}
\hfill\vspace{0.1in}
\label{s:appaslf}

By section~\ref{s:waiyi} and characterization (a), $\cS$ adjointly expresses the shadowing contribution, a significant part of the linear response.

By (c), to numerically compute $\cS(\omega,\psi)$, solve the nonintrusive adjoint shadowing problem in continuous-time,
\[ \begin{split} 
  \nu =  \nu ' + \ueps a \,,
  \quad  \mbox{s.t. }
  \ip{\nu_0, \ueps_0}=0,
\end{split}\]
where $\ueps$ is $u$-many homogeneous adjoint solutions forming a basis of $V^{*u}$, and $\nu'$ is an inhomogeneous solution such that $\nu'_T(F_T) = \psi_T$.
This problem mimics characterization (c).
There are other ways to mimic, for example we previously used a least-squares version of this problem.
The nonintrusive adjoint shadowing algorithm was used on fluid examples with $u=8$, and $M\approx3\times10^6$; 
we only computed $SC$, not the full linear response;
the cost was on the same order of simulating the flow.
The result is not accurate since we did not compute UC, but the result approximately reflects the relation between the averaged observable $\rho(\Phi)$ and the parameter $\gamma$, so only computing SC could sometimes gives something useful for engineering purposes \cite{Ni_nilsas}.

By (b), the adjoint shadowing lemma is also important for computing $UC$ on an orbit, where
\[ \begin{split}
  UC = \lim_{W\rightarrow\infty} - \rho(\phi \frac{\delta^{cu} \tL^{cu}\sigma} \sigma),
  \quad\textnormal{where}\quad 
  \phi:=\int_{-W}^W \Phi\circ f^t -\rho(\Phi) dt .
\end{split} \]
Here $\sigma$ is the conditional measure on center-unstable manifolds, and $\delta^{cu} \tL^{cu}\sigma$ is the center-unstable perturbation of the transfer operator on $\sigma$.
In a forthcoming paper, we shall show that if $X=\delta F$, 
$\eta = \eps^c X$ (so $\eta$ is differentiable along $F$),
then
\[ \begin{split}
 - \frac{\delta^{cu} \tL^{cu}\sigma} \sigma =  \div^{cu}_\sigma X^{cu} = \cS(\div^v \nabla F, \div^v F) X + \div^v X + F(\eta),
\end{split} \]
Note that here $\div^v \nabla F$ is no longer the differential of $\div^v F$, but they still satisfy the constraint in $\cA$.
This justifies our seemingly unnecessary involvement of $\psi$ in the continuous-time shadowing theory.

A `discrete-adjoint’ nonintrusive algorithm for computing $SC$ of continuous-time linear response \cite{Blonigan_2017_adjoint_NILSS} was given before our nonintrusive adjoint shadowing algorithm \cite{Ni_nilsas}. 
The discrete adjoint is obtained by transposing the linear system in our tangent nonintrusive shadowing algorithm \cite{Ni_NILSS_JCP}.
However, its cost is twice larger than ours, since it requires computing and saving and reloading all unstable tangent solutions, which are forward-propagating vectors, at checkpoints.
Moreover, its theory was missing or less satisfactory.
For example, its solution was not continuous; it did not define a dual operator; the boundedness and well-definedness (as a covector field) of its solution were unclear; its description is more complicated than ours and less similar to the conventional backpropagation method; it can not give a $SC+UC$ decomposition for the linear response of finitely-long time system; finally, it can not be used in the unstable contribution of the linear response.

\section{A future direction: adding randomness}
\label{s:future}

The main drawback of our current works, including shadowing and equivariant divergence formulas, is that they involve the Jacobian matrices. 
When the Jacobian matrix has bad properties, for example non-hyperbolicity, both formulas are affected, although the algorithm for $SC$ is more robust than $UC$.
$ UC$ is more fragile since the equivariant divergence formula has the term $\cS(\div^v \nabla f_*)$, and both $\cS$ and $\div^v \nabla f_*$ are affected by non-hyperbolicity.
For non-hyperbolic problems such as Lorenz 96, we can still compute $SC$ in practice, but not $UC$.

The main thing to do next should be to overcome the non-hyperbolicity in general.
On one end, it is known that some non-hyperbolic systems have no linear response, so we can not expect to compute it accurately \cite{Baladi2007,Wormell2019}.
However, looking at the observable-parameter plot of such systems, it seems that there is still some trend between the $\gamma$ and $\rho(\Phi)$ \cite{np}. 
So, for practical purposes, we may still ask how to compute an approximate linear response which reflects this trend, but also with the least approximation error.
Such an approximation would still be very useful for practical purposes such as computational design optimization.

It seems that a plausible idea is to add random noise when the hyperbolicity is bad.
This has two benefits, the first is that the linear response of the random dynamical system has another formula, the kernel differentiation formula (also called the likelihood ratio method by probabilists).
This formula differentiates only the kernel but not the Jacobian matrix, so the bad properties of the Jacobian, including non-hyperbolicity, does not affect the algorithm at all \cite{Rubinstein1989,Reiman1989,Glynn1990,Hairer2010}.
This formula admits an `ergodic theorem' version, which runs on only one orbit, and the size of the integrand does not grow with the orbit length \cite{np}.
Another benefit is that the random noise helps to quickly move out of non-hyperbolic regions.
In particular for singular hyperbolic flows, non-hyperbolicity is typically associated with a stagnant point, and it takes a long to move out of a neighborhood (which is actually the cause of the non-hyperbolicity).

We should also try to add minimal amount of noise to reduce the associated error.
However, the kernel differentiation formula is expensive when the noise is small, so we can not add a small noise everywhere.
So it seems that we have to add noise only to local regions where the deterministic formulas do not work, and we use the kernel formulas only occasionally.
Since the area of `local' regions tends to be small when the dimension is high, it seems that we can have a small noise-induced error by adding a large noise to only a small region.

It seems that there are two major challenges in this random-deterministic switching program.
The first is to extend the pathwise perturbation formula and divergence formulas to random dynamical systems.
The continuous-time case is more difficult since we need to deal with stochastic calculus.
In particular, it requires extra care to recover the time-reparametrization trick which we used for continuous-time shadowing.
The second difficulty is to let the stochastic and deterministic formulas communicate.
The Bismut-Elworthy-X.Li formula is an example where the pathwise perturbation formula communicates information to the kernel differentiation formula under perturbation to initial conditions \cite{Bismut1984,Elworthy1994,Ren2019}, but we need more formulas for other types of communications for parameter perturbations, which might be more difficult.

Moreover, there should be other problem-specific ways to add noise.
For dynamical systems obtained from PDE, we might attempt to add space-time noise according to features in the physical space.
For example, we might try to add noise only to large vortices.

\section*{Acknowledgements}

The author thanks Yao Tong and Chaitanya Talnikar for helpful discussions.

\begin{appendix}

\section{Holder continuity of `split-propagate' scheme}
\label{a:holder}

The `split-propagate' scheme, as used in the expansion formula \cref{e:expand_v,e:haohao}, and characterization (b) in adjoint shadowing lemmas, is common in the linear response theory.
It was proved by a functional approach that the shadowing vector $v$ in \cref{e:expand_v} is Holder continuous, using the fact that $v$ is the derivative of a conjugation map \cite{Ruelle_diff_maps}.
But that proof does not seem to work directly on continuous-time or adjoint cases.
We give a more basic proof which also works on continuous-time or adjoint cases.

\begin{lemma} \label{l:holder}
  If $X$ is Holder on $K$, then $v:=S(X)$ in~\cref{e:expand_v} is Holder on $K$.
\end{lemma}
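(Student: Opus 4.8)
The plan is to show that the series $v = S(X) = \sum_{n\ge 0} f^n_* P^s X_{-n} - \sum_{n\le -1} f^n_* P^u X_{-n}$ converges in the H\"older norm on $K$, so that $v$ is itself H\"older. The key point is that H\"older continuity of $v$ follows from two ingredients: (i) the individual terms $f^n_* P^s X_{-n}$ decay exponentially in sup-norm (this is just hyperbolicity, already used to prove boundedness in theorem~\ref{t:AS}), and (ii) the H\"older seminorms of these terms grow at most subexponentially, with a rate strictly smaller than the decay rate in (i). Then a standard Weierstrass-type argument gives a convergent majorant for both the sup-norm and the H\"older seminorm of the tail, hence $v$ lies in $\cX^\alpha(K)$.

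First I would set up the pieces. The projections $P^s, P^u$ onto the stable/unstable subbundles are H\"older continuous on $K$ (a standard fact for uniformly hyperbolic sets; one may cite it), so $P^s X$ and $P^u X$ are H\"older whenever $X$ is. Next, the bundle map $f_*$ and its iterates: on a compact hyperbolic set, $x\mapsto f^n_*|_{V^s(x)}$ has operator norm $\le C\lambda^n$, and its H\"older seminorm (measuring how $f^n_*$ varies with the base point, after trivializing the bundle locally via, say, a smooth connection or local charts) grows at most like $C\mu^n$ for some $\mu>1$ — this is because each factor $f_*$ contributes a bounded multiplicative constant to the seminorm, and there are $n$ factors. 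The crucial inequality to extract is: after composing with the exponentially-decaying sup-norm bound, the H\"older seminorm of $f^n_* P^s X_{-n}$ is bounded by $C(\lambda\mu^?)^n \|X\|_{C^\alpha}$; I must check that the effective rate is $<1$. Here one uses that the $\alpha$-H\"older seminorm of a composition/product interacts with the exponential contraction so that a factor $\lambda^{\alpha n}$ (or similar) survives — concretely, moving two nearby base points $x,y$ a distance $d$ apart, the relevant image vectors are compared after $n$ backward steps where distances are expanded by at most $\lambda^{-n}$, but the H\"older exponent $\alpha$ makes this contribute $\lambda^{-\alpha n}$, which is beaten by the $\lambda^n$ from the norm bound provided we also account for the genuine contraction of $f^n_*$ on $V^s$. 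I would organize this as: $[f^n_* P^s X_{-n}]_\alpha \le C\lambda^{(1-\alpha)n}\|X\|_{C^\alpha}$ or a similar geometric bound, summable since $0<\alpha<1$.

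With those bounds in hand, the conclusion is routine: $\sum_{n\ge 0}\|f^n_* P^s X_{-n}\|_{C^\alpha} \le C\|X\|_{C^\alpha}\sum_{n\ge 0}\lambda^{(1-\alpha)n} < \infty$, and likewise for the unstable sum over $n\le -1$ using $f^{-n}_*|_{V^u}$ contraction; so the partial sums form a Cauchy sequence in the Banach space $\cX^\alpha(K)$, whose limit is $v$. The same estimates, with $f_*$ replaced by $f^*$ and $P^{s,u}$ by $\cP^{s,u}$ (whose H\"older regularity and contraction are lemma~\ref{l:adj hyp}), handle the adjoint discrete-time case, and with the integral $\int f^t_* \cdots\, dt$ in place of the sum — using uniform boundedness of $|F|$ away from $0$ and $\infty$ so the center factor $\eta = P^c X / (\text{coeff})$ stays H\"older — handles the continuous-time cases in theorem~\ref{t:ASF} and equation~\eqref{e:haohao}. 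I expect the main obstacle to be step (ii): pinning down the precise growth rate of the H\"older seminorm of $f^n_*|_{V^s}$ as a bundle map and verifying that, after pairing with the norm decay, the resulting rate is genuinely $<1$; this requires being careful about how one trivializes the (only H\"older, not smooth) subbundles $V^s,V^u$ when defining the seminorm, and about the interplay between the expansion of distances under backward iteration and the H\"older exponent $\alpha$. Everything else is bookkeeping with geometric series.
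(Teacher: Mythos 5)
There is a genuine gap at the central step of your plan. You propose to sum the series in the Banach space $\cX^\alpha(K)$ by showing that the $\alpha$-H\"older seminorm of the $n$-th term $f^n_*P^sX_{-n}$ decays geometrically, and you guess a rate like $\lambda^{(1-\alpha)n}$ on the grounds that backward iteration expands distances ``by at most $\lambda^{-n}$''. That is not right: the separation $d(f^{-n}x,f^{-n}y)$ is controlled by the global Lipschitz constant $\mu:=\max\{|f_*|,|f_*^{-1}|\}$ of the map, not by $\lambda^{-1}$, so the honest bound on the seminorm of the $n$-th term (obtained exactly by your telescoping/Leibniz argument) is $C(\lambda\mu^{\alpha})^n$. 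Since $\mu$ can be much larger than $\lambda^{-1}$, the product $\lambda\mu^{\alpha}$ need not be less than $1$ for the given exponent $\alpha$, the geometric majorant fails, and the series does not converge in $\cX^\alpha(K)$ in general. Indeed the conclusion you would get --- that $v$ is $\alpha$-H\"older with the same exponent as $X$ --- is stronger than what is true: one expects a loss of H\"older exponent here, as for the regularity of invariant distributions. You correctly flag ``verifying that the resulting rate is genuinely $<1$'' as the main obstacle, but that obstacle is not surmountable as stated; it is the whole difficulty.

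The paper circumvents this by never summing H\"older seminorms of all terms. For a given pair $x,y$ it chooses a cutoff $N=N(x,y)$ with $\mu^N d(x,y)<\delta\le\mu^{N+1}d(x,y)$, applies the telescoping estimate (which gives $C\lambda^n\mu^{n\alpha}d^\alpha(x,y)$ per term, possibly growing in $n$) only for $n\le N$, and bounds the tail $n>N$ crudely in sup-norm by $C\lambda^n$. The choice of $N$ makes both pieces small: $\lambda^N\mu^{N\alpha}d^\alpha\le\lambda^N\delta^\alpha$ and $\lambda^N\le Cd^\beta(x,y)$ with $\beta$ defined by $\lambda=\mu^{-\beta}$, yielding H\"older continuity with exponent $\min(\alpha,\beta)$ rather than $\alpha$. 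Your approach can be salvaged along similar lines by first downgrading the exponent: since $X\in C^\alpha$ implies $X\in C^{\alpha'}$ for any $0<\alpha'<\min(\alpha,\beta)$, and $\lambda\mu^{\alpha'}<1$ precisely when $\alpha'<\beta$, your Weierstrass argument does converge in $\cX^{\alpha'}(K)$ for such $\alpha'$ --- but you must make that reduction explicit and abandon the claim of preserving the exponent $\alpha$. The rest of your outline (H\"older continuity of $P^s,P^u$, the transfer to the adjoint and continuous-time cases) matches the paper's and is fine once this is repaired.
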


\begin{proof}
Pick an finite open cover of $K$ by coordinate charts, denoted by $\{U_i\}$.
Let $\delta$ be the Lebesgue number of this open cover, and $d$ be the distance function.
Take any $x, y$ with $d(x,y) \le \delta$, there is a chart containing both $x$ and $y$, the goal is to show that $v$ is Holder under this chart.

Let $\mu:=\max \{|f_*|, |f_*^{-1}|\}$, where $|\cdot|$ is the operator norm.
Let $N(x,y)$ be the number such that 
\[ \begin{split}
  \mu^N d(x,y) < \delta \le \mu^{N+1} d(x,y).
\end{split} \]
So for each $-N\le k\le N$, $x_k:=f^k x$ and $y_k$ are in the same chart, denoted by $U_k$.
Partition the sequence in the expansion of $v$,
\[ \begin{split}
  v
  = \sum_{0\le n\le N} f^{n}_{*} P^s X_{-n} 
  + \sum_{n\ge N+1} f^{n}_{*} P^s X_{-n} 
  - \sum_{-N\le n\le -1 } f^{n}_{*} P^u X_{-n} 
  - \sum_{n\le -N-1} f^{n}_{*} P^u X_{-n} .
\end{split} \]
We only look at terms with $n\ge0$; the $n\le -1$ part is similar.

Take $v(x)-v(y)$ under the chart containing $x$ and $y$,
\[ \begin{split}
  v(x) - v(y)
  = \sum_{0\le n\le N} 
  f^{n}_{*} P^s X_{-n} (x)
  - f^{n}_{*} P^s X_{-n} (y)\\
  + \sum_{n\ge N+1} f^{n}_{*} P^s X_{-n} (x)
  - f^{n}_{*} P^s X_{-n} (y)
  - \sum_{n\le 0 } \cdots.
\end{split} \]
Bound the terms in the second sum by triangle inequality, so
\[ \begin{split}
  |f^{n}_{*} P^s X_{-n} (x) - f^{n}_{*} P^s X_{-n} (y)|
  \le C \lambda^n
\end{split} \]
where $C$ depends on the hyperbolicity constant, the sup operator norm of $P^s$, and the sup of $X$, all taken on the entire $K$.

For the $n$th term, $n\le N$, for any $0\le k\le n$, we can write $f_s(x_{-k})$ and $f_s(y_{-k})$ as matrices under the same chart $U_{-k}$, where $f_s := f_*P^s$.
We can also write $X(x_{-n})$ and $X(y_{-n})$ as Euclidean vectors in chart $U_n$.
Apply the finite-difference version of Leibniz rule, 
\[ \begin{split}
  f^{n}_{*} P^s X_{-n} (x)
  - f^{n}_{*} P^s X_{-n} (y)
  = f^{n}_{s} X (x_{-n})
  - f^{n}_{s} X (y_{-n}) \\
  = 
  f^{n}_{s} (x_{-n}) (X(x_{-n}) - X(y_{-n}))
  + \sum_{1\le k\le n}
  f^{k-1}_{s} (x_{1-k}) (f_{s}(x_{-k})-f_s(y_{-k})) f^{n-k}_{s} X(y_{-n}).
\end{split} \]
Since $f_s$ is a Holder continuous operator on $K$, there are two constants $C$ and $\alpha$ on all $U_i$'s, such that $|f_{s}(x)-f_s(y)|\le C d^\alpha(x,y)$ for any $x, y$ in the same chart.
Let $\alpha, C$ also be the Holder constants for $X$, and let $\lambda \mu^\alpha\neq 1$.
Note that
\[ \begin{split}
  |f_s^k| 
  = |(f_* P^s)^k|
  = |f_*^k P^s| \le C\lambda ^k,
\end{split} \]
so
\[ \begin{split}
  |f^{n}_{s} X_{-n} (x)
  - f^{n}_{s} X_{-n} (y)|
  \le C \lambda^{n} d^\alpha(x_{-n},y_{-n}) 
  + \sum_{1\le k\le n} C \lambda^{k-1} d^\alpha(x_{-k},y_{-k}) \lambda^{n-k} \\
  \le C \lambda^{n} \sum_{1\le k\le n}  (\mu^k d(x,y))^\alpha 
  \le C \lambda^n \mu^{n\alpha} d^\alpha(x,y). 
\end{split} \]
Here $C$ changes from line to line, but each $C$ is uniform for all $x, y$ in $K$ if $d(x,y)\le \delta$.

Summarizing, 
\[ \begin{split}
|v(x)-v(y)|
  \le \sum_{0\le n\le N} C \lambda^n \mu^{n\alpha} d^\alpha(x,y)
  + \sum_{n\ge N+1} C \lambda^n\\
  \le 
  C d^\alpha(x,y)
  + C \lambda^N \mu^{N\alpha} d^\alpha(x,y)
  + C \lambda^N .
\end{split} \]
Since $ \mu^N d(x,y) < \delta \le \mu^{N+1} d(x,y) $,
\[ \begin{split}
|v(x)-v(y)|
  \le C d^\alpha(x,y) + C \lambda^N \delta^\alpha + C \lambda^N
  \le C d^\alpha(x,y) + C \lambda^N.  
\end{split} \]
Since $\lambda < 1 < \mu$, there is $\beta>0$ such that $\lambda = \mu^{-\beta}$, so
\[ \begin{split}
  \lambda^N = \mu^{-\beta N} \le C d^\beta(x,y).
\end{split} \]
Finally, 
\[ \begin{split}
|v(x)-v(y)|
  \le C d^\alpha(x,y) + C d^\beta(x,y),
\end{split} \]
so $v$ is Holder on $K$.
\end{proof}

As we can see, this proof works on a somewhat more general formula, which not only applies to shadowing formulas, but also applies to the tangent version of the equivariant divergence formula in \cite{fr}.
We say that a map $g:T \cM\rightarrow T \cM$ covers $f$ on $K$, if for all $x\in K$, $g_x$ is a map from $T_x\cM$ to $T_{fx}\cM$; we sometimes omit $x$ and write only $g$.
We say $g$ is Holder if it is holder under all coordinate charts.
We say $g$ exponentially decays if there is $0<\lambda<1$ and $C>0$, such that $|g^n|<C\lambda^n$.
For example, $f_s=f_*P^s$ in the proof above is exponentially decaying and Holder continuous.

\begin{lemma} [Holder for `decay-sum' scheme] \label{l:www} 
If $f$ is a bijection map on a compact set $K$, $f$ and $f^{-1}$ are Lipschitz continuous; $g$ covers $f$, $h$ covers $f^{-1}$, $g$ and $h$ are Holder continuous and exponentially decaying; $Y$ is Holder continuous.
Then the following vector fields on $K$ are Holder continuous:
\[ \begin{split}
  S_1(x) := \sum_{n\ge 0} g^n Y(f^{-n}x),
  \quad \textnormal{} \quad 
  S_2(x) := \sum_{n\ge 0} h^n Y(f^{n}x).
\end{split} \]
\end{lemma}

To prove this, just replace $f_*P^s$ by $g$, $X$ by $Y$, $v$ by $S_1$ in the proof above.
For $S_2$, repeat the proof with $f_*^{-1}P^u$ replaced by $h$.

\end{appendix}

\bibliographystyle{abbrv}
{\footnotesize\bibliography{library}}

\end{document}